\newtheorem{theorem}{Theorem}
\theoremstyle{plain}
\newtheorem{corollary}{Corollary}
\newtheorem{lemma}{Lemma}
\newtheorem{proposition}{Proposition}
\newtheorem{remark}{Remark}
\numberwithin{equation}{section}
\begin{document}
\title{On the irrationality exponent of the regular paperfolding numbers}
\author{Yingjun Guo, Wen Wu and Zhixiong Wen}
%\address[A. One and A. Two]{Author OneTwo address line 1\\
%Author OneTwo address line 2}
%\email[A. One]{aone@aoneinst.edu}
%\urladdr{http://www.authorone.oneuniv.edu}

\begin{abstract}
In this paper, improving the method of Allouche \emph{et al.} \cite{APWW98}, we calculate the Hankel determinant of the regular paperfolding sequence, and prove that the Hankel determinant sequence module 2 is periodic with period 10 which answers Coon's conjecture \cite{CV12}. Then we extend Bugeaud's method \cite{Bugeaud11} to obatin the exact value of the irrationality exponent for some general transcendental numbers. Using the results above, we prove that the irrationality exponents of the regular paperfolding numbers are exactly 2.
\end{abstract}
%\begin{keyword}
%Paperfolding sequence \sep   Hankel determinant \sep Irrationality exponent \sep  Automatic sequence
%\end{keyword}

\maketitle

%\address[A. One and A. Two]{Author OneTwo address line 1\\
%Author OneTwo address line 2}
%\email[A. One]{aone@aoneinst.edu}
%\urladdr{http://www.authorone.oneuniv.edu}

\section{Introduction}

%The history of Diophantine approximation is quite old, one of its goal is to compare the distance between a given real number $\xi$ and a rational
%number $p/q$ with the denominator $q $. The irrationality exponent measures the quality of the best rational approximations to $\xi$. Let recall the %definition of irrationality exponent.
Let $\xi $ be an irrational real number, the \emph{irrationality exponent} $(
$or \emph{irrationality measure}$)$ of $\xi $, denoted by $\mu (\xi )$, is
defined as the supremum of the set of real numbers $\mu$ such that the
inequality
\begin{eqnarray*}
|\xi-\frac{p}{q}|<\frac{1}{q^{\mu}}
\end{eqnarray*}
has infinitely many solutions $(p,q)\in\mathbb{Z}\times\mathbb{N}$.

An application of the theory of continued fraction, we know that $\mu (\xi
)\geq 2$ for all irrational number $\xi$. Khintchine's Theorem \cite{K64}
tells us that $\mu (\xi )=2$ for Lebesgue-almost all real numbers $\xi $.
Furthermore, Roth's Theorem \cite{R55} asserts that $\mu (\xi )=2$ for every
algebraic irrational number.

For transcendental numbers whose continued fraction expansions are known,
one can get the exact value of the irrationality exponents. For example, the
irrationality exponent of $e$ and the sturm number are given (see \cite{A10}%
, \cite{BKS11}, \cite{L02}). While we do not know the continued fraction
expansions of transcendental number, we hardly get the exact values of their
irrationality exponents. There are two examples. One is the classical transcendental numbers $\pi,\ln(2)$ (see \cite{S08},\cite{N10} and therein), the other is  the \emph{automatic number} \cite{AS03} defined by their expansion in some integer.
%that we only have the upper bound of $\mu
%(\pi),\mu (\ln(2))$ (see \cite{S08},\cite{N10} and there in). The other example is the \emph{automatic number} \cite{AS03} defined by their expansion in some integer.
% A class of real numbers defined by their expansion in some integer, called the \emph{automatic number} \cite{AS03}, have been studied by many authors.
%\textcolor[rgb]{0.00,0.00,1.00}{There is a class of real numbers defined by their expansion in some integer and whose continued fraction expansion is unknown.}
%\textcolor[rgb]{1.00,0.00,0.00}{And, many authors have studied a
%special class of numbers, called \emph{automatic number} \cite{AS03}, although their
%continued fraction expansion are not clear.}
Recall that a real number $\xi $ with $b$-ary expansion
\begin{equation*}
\xi =\sum_{i\geq 0}u_{i}\frac{1}{b^{i}}
\end{equation*}%
is called an automatic real number if $\{u_{i}\}_{i\geq 0}$ is an automatic
sequence. In fact, all the irrational automatic real numbers are
transcendental, this result was proved by Adamczewski and Bugeaud \cite{AY07}%
. %2007, Adamczewski and Bugeaud \cite{AY07} proved that all the irrational
%automatic real numbers are transcendental.
In 2006, Adamczewski and Cassaigne \cite{AC06} proved that all automatic
real numbers have finite irrationality exponents. In 2008, Bugeaud \cite%
{Bugeaud08} constructed a class of real numbers whose irrationality exponent
can be read off from their $b$-ary expansion and proved that there exist
automatic real numbers with any prescribed rational irrationality exponent.
In 2011, Bugeaud, Krieger and Shallit \cite{BKS11} showed that the
irrationality exponent of every automatic (resp. morphic) number in that
class is rational (resp. algebraic). And they conjectured that the result
remains true for all automatic (resp. morphic) number. In 2011, applying the
fact that the Hankel determinants of the Thue-Morse sequence over $\{-1,1\}$
are nonzero \cite{APWW98}, Bugeaud \cite{Bugeaud11} proved that the
irrationality exponents of the Thue-Morse real numbers are exactly 2.
%In 1998, Allouche, Peyri\`{e}re, Wen and Wen \cite{APWW98} showed that the
%Hankel determinants of the Thue-Morse sequence over $\{-1,1\}$ are nonzero.
%Applying this fact, Bugeaud \cite{Bugeaud11} proved that the irrationality
%exponents of the Thue-Morse real numbers are exactly 2, recently.
Using Bugeaud's method, in 2012, Coons \cite{C12} proved that the
irrationality exponent of the sum of the reciprocals of the Fermat numbers
is 2. Recently, Wen and Wu \cite{WW13} showed that the
irrationality exponents of the Cantor real numbers are exactly 2 in the same way.

In this paper, we extend Bugeaud's method to some general
transcendental numbers and determine the irrationality exponent of the regular paperfolding numbers.
\subsection{The main result}
Let $\{u_{i}\}_{i\geq 0}$ be an integer sequence, whose generating function
is $f(x)=\sum_{i\geq 0}u_{i}x^{i}$. The determinant of the $n\times n$-matrix $(u_{i+j-2})_{1\leq i,j\leq n }$ is called the \emph{Hankel determinant of
order $n$} associated to $f(x)$ (or the sequence $\{u_{i}\}_{i\geq 0}$), denoted by $H_{n}(f)$. Our main result is as follows.

\begin{theorem}
\label{proposition1} Suppose the generating function $f(x)=\frac{A(x)}{B(x)}%
+C(x)f(x^{k})$ $(k\geq 2)$, where $A(x),B(x),C(x)\in \mathbb{Z}[x]$. Let $%
\{n_{i}\}_{i\geq 0}$ be an increasing positive integer sequence. If for all $%
i\geq 0$, $H_{n_{i}}(f)H_{n_{i}+1}(f)\neq 0$, then for all $b\geq 2$,
\begin{equation*}
\mu \left( f(\frac{1}{b})\right) \leq 2k.
\end{equation*}%
Moreover, if $\liminf\limits_{i\rightarrow \infty }\frac{n_{i+1}}{n_{i}}=1,$
then,
\begin{equation*}
\mu \left( f(\frac{1}{b})\right) =2.
\end{equation*}
\end{theorem}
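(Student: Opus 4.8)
The plan is to follow the scheme of Allouche--Peyri\`ere--Wen--Wen \cite{APWW98} and Bugeaud \cite{Bugeaud11}: convert the non-vanishing of the Hankel determinants into good rational approximations of the power series $f$, use the Mahler-type relation to spread each such approximation into a whole family, specialize at $x=1/b$, and feed the resulting rational approximations of $f(1/b)$ into a gap (transference) lemma that controls the irrationality exponent.

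First I would recall that $H_{n}(f)\neq 0$ yields the $[n-1/n]$ Pad\'e approximant of $f$, which after clearing denominators gives $P_{n},Q_{n}\in\mathbb{Z}[x]$ with $\deg P_{n}\le n-1$, $\deg Q_{n}\le n$, $Q_{n}(0)\neq 0$, and $Q_{n}(x)f(x)-P_{n}(x)=\sum_{\ell\ge 2n}r_{\ell}^{(n)}x^{\ell}$, and that the extra hypothesis $H_{n+1}(f)\neq 0$ forces the order to be exactly $2n$, so $r_{2n}^{(n)}$ is a non-zero integer. Applying this with $n=n_{i}$, evaluating at $x=1/b$, and multiplying by the appropriate power of $b$ produces, for each $i$, integers $p_{i},q_{i}$ with $q_{i}\asymp b^{n_{i}}$ and $q_{i}f(1/b)-p_{i}=\sum_{\ell\ge 2n_{i}}r_{\ell}^{(n_{i})}b^{n_{i}-\ell}\asymp r_{2n_{i}}^{(n_{i})}b^{-n_{i}}$, so that $|f(1/b)-p_{i}/q_{i}|$ is essentially of size $q_{i}^{-2}$.

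Next I would spread these via the functional equation. Substituting $x\mapsto x^{k^{m}}$ in $Q_{n_{i}}(x)f(x)-P_{n_{i}}(x)=O(x^{2n_{i}})$ gives an order-$2k^{m}n_{i}$ approximation of $f(x^{k^{m}})$; iterating $f(x)=\tfrac{A(x)}{B(x)}+C(x)f(x^{k})$ yields $f(x)=R_{m}(x)+\big(\prod_{j<m}C(x^{k^{j}})\big)f(x^{k^{m}})$ with $R_{m}$ rational of denominator $\prod_{j<m}B(x^{k^{j}})$, and substituting the above approximation produces $\widetilde P_{i,m}/\widetilde Q_{i,m}$ with $\widetilde Q_{i,m}=\big(\prod_{j<m}B(x^{k^{j}})\big)Q_{n_{i}}(x^{k^{m}})$ of degree $k^{m}n_{i}+O(k^{m})$, whose error $\widetilde Q_{i,m}f-\widetilde P_{i,m}=\big(\prod_{j<m}B(x^{k^{j}})C(x^{k^{j}})\big)\big(Q_{n_{i}}(x^{k^{m}})f(x^{k^{m}})-P_{n_{i}}(x^{k^{m}})\big)$ has order exactly $2k^{m}n_{i}$ (using $r_{2n_{i}}^{(n_{i})}\neq 0$ and $B(0)C(0)\neq 0$). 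Evaluating at $x=1/b$ gives, for all $i$ and $m\ge 0$, an integer rational approximation of $f(1/b)$ of denominator $\asymp b^{k^{m}n_{i}}$ and quality essentially $(\text{denominator})^{-2}$. The crucial structural point is that the ``level'' set $\mathcal{L}=\{k^{m}n_{i}:m\ge 0,\ i\ge 0\}$ is closed under multiplication by $k$, so any two consecutive elements of $\mathcal{L}$ differ by a factor at most $k$.

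Finally I would order the approximations by increasing denominator $q_{j}\asymp b^{N_{j}}$ with $N_{j}\in\mathcal{L}$, and apply a standard transference argument: comparing an arbitrary rational $p/q$ with the first $p_{j}/q_{j}$ whose denominator exceeds a fixed multiple of $q$, via $|p/q-p_{j}/q_{j}|\ge (qq_{j})^{-1}$, one obtains $\mu(f(1/b))\le 2\,\limsup_{j}\tfrac{\log q_{j+1}}{\log q_{j}}$, provided the two-sided estimate $|f(1/b)-p_{j}/q_{j}|\asymp q_{j}^{-2}$ is available. Because consecutive levels differ by a factor $\le k$, this limsup is $\le k$, which yields $\mu(f(1/b))\le 2k$. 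When moreover $\liminf_{i}n_{i+1}/n_{i}=1$, I would use the near-coincidences $n_{i+1}\approx n_{i}$ together with the functional-equation towers above them to show that $\mathcal{L}$ is multiplicatively dense (every interval $[M,(1+\varepsilon)M]$ with $M$ large meets $\mathcal{L}$), forcing $\tfrac{\log q_{j+1}}{\log q_{j}}\to 1$ and hence $\mu(f(1/b))\le 2$, which with the universal bound $\mu\ge 2$ gives equality. The main difficulty here is quantitative rather than conceptual: one must bound the heights of $P_{n_{i}},Q_{n_{i}}$ and the leading error coefficients $r_{2n_{i}}^{(n_{i})}$ sharply enough that the approximations genuinely have quality $\asymp q^{-2}$ with $\log q_{j}=N_{j}\log b\,(1+o(1))$ — the delicate case being the leading term of each error series dominating its tail at $x=1/b$ when $b$ is small, e.g. $b=2$ — and one must verify that $\liminf_{i}n_{i+1}/n_{i}=1$ really does force the multiplicative density of $\mathcal{L}$ invoked in the last step, which is the combinatorial crux of the ``moreover'' assertion and the place where the Mahler functional equation is indispensable.
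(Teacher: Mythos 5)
Your plan follows the paper's proof almost step for step: Pad\'e approximants $[n_i-1/n_i]_f$ obtained from $H_{n_i}(f)H_{n_i+1}(f)\neq 0$, the iterated Mahler relation $f(x)=A_m(x)/B_m(x)+C_m(x)f(x^{k^m})$ to propagate each one into a tower, specialization at $x=1/b$, and a transference lemma of Adamczewski--Rivoal type (Lemma \ref{lemma3}); the first bound $\mu\le 2k$ comes out the same way in both treatments. One small discrepancy: you assert that the error of $\widetilde P_{i,m}/\widetilde Q_{i,m}$ has order exactly $2k^m n_i$ ``using $B(0)C(0)\neq 0$'', but $C(0)\neq 0$ is not part of the hypotheses and fails in the intended application, where $C(x)=x$. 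If $s$ is the order of vanishing of $C$ at $0$, the order is shifted by $s\frac{k^m-1}{k-1}$; the paper carries this factor explicitly, and the upshot is that the approximation quality is not ``$\asymp q^{-2}$'' but is two-sided with exponents $\delta_l=\frac{2l}{l+\alpha+\beta+\gamma}$ and $\rho_l=\frac{2l+\gamma}{l+\alpha+\beta+\gamma}$, which only tend to $2$ as $l=n_i\to\infty$. This is harmless but must be tracked, since Lemma \ref{lemma3} is sensitive to the gap between the two exponents.

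The step you single out as ``the combinatorial crux'' is a genuine gap, and it cannot be filled as stated: $\liminf_i n_{i+1}/n_i=1$ does \emph{not} force the multiplicative density of $\mathcal{L}=\{n_ik^m\}$. Take $k=2$ and $\{n_i\}=\{2^j,\,2^j+1:j\ge 1\}$, for which the liminf is $1$. Then $\mathcal{L}\cap[2^N,2^{N+1}]=\{2^N\}\cup\{2^N+2^m:0\le m\le N\}$, whose largest consecutive ratio tends to $4/3$; every interval $[M,(1+\varepsilon)M]$ with $\varepsilon<1/3$ and $M$ just above $\tfrac32\cdot 2^N$ misses $\mathcal{L}$, and the transference argument can then give at best $\mu\le 8/3$, not $2$. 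What the method actually needs is $\lim_i n_{i+1}/n_i=1$ (equivalently $\limsup=1$, since the ratios of an increasing integer sequence exceed $1$): then for every $\varepsilon>0$ all sufficiently late consecutive ratios are $<1+\varepsilon$, the set $\{n_i\}\cap[k^{L-1},k^L-1]$ is $\varepsilon k^L$-dense in that interval for large $L$, and Lemma \ref{lemma4} applies. You should be aware that the paper's own proof makes exactly the slip you were worried about: from $\liminf=1$ it extracts ``a subsequence with $n_{i_{j+1}}/n_{i_j}<1+\varepsilon$ for all large $j$'', which is only available under $\limsup=1$. The application to the paperfolding numbers is unaffected, since there $n_i=10i+1$ and $n_{i+1}/n_i\to 1$; but in your write-up you should either strengthen the hypothesis of the ``moreover'' part to $\lim_i n_{i+1}/n_i=1$ or supply an argument the paper does not have.
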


\begin{remark}
While determining the irrationality exponent of the Thue-Morse numbers,
Bugeaud \cite{Bugeaud11} used the fact that all the Hankel determinants of
the Thue-Morse sequence are nonzero \cite{APWW98}. In fact, we only need a
proper subsequence $\{n_{i}\}_{i\geq0}$ satisfying $H_{n_{i}}(f)H_{n_{i}+1}(f)\neq0
$. This requirement seems to be feasible for some transcendental numbers.
\end{remark}
\begin{remark}
$H_{n_{i}}(f)\neq0$ for $i\geq0$ implies that $f(\frac{1}{b})\notin\mathbb{Q}$. Under the assumption of Theorem \ref{proposition1}, $f(\frac{1}{b})$ can be transcendental. For example, the generating functions of the Thue-Morse, the Cantor sequence and the paperfolding sequence, satisfy the assumption of Theorem \ref{proposition1}.
\end{remark}

\subsection{Irrationality exponent of the regular paperfolding sequence }
\iffalse
In the following, we will investigate the irrationality exponent of the
regular paperfolding numbers. We need some definitions. Let $\mathcal{A}$ be
the nonempty finite alphabet set and $\epsilon $ be the empty word. $%
\mathcal{A}^{+}$ and $\mathcal{A}^{\ast }$ stand for the set of nonempty
finite words and the set of finite words respectively. A \emph{morphism}
denoted by $\sigma $,
%(sometimes called a \emph{substitution} over $\mathcal{A}$)
is a map from $\mathcal{A}^{\ast }$ to $\mathcal{A}^{\ast }$ satisfying $%
\sigma (UV)=\sigma (U)\sigma (V)$ for $U,V\in {A}^{\ast }$. If $\sigma (a)$
has the same length of $k$ for any $a\in \mathcal{A}$, we say that $\sigma $
is \emph{$k$-uniform}. As usually, a 1-uniform morphism is called a \emph{%
coding}. If there exists a letter $a$ such that $\sigma (a)=aW$ for some $%
W\in \mathcal{A}^{+}$, and furthermore, $\sigma ^{n}(W)\neq \epsilon $ for
all $n\geq 0$, then the infinite word $\sigma ^{\infty }(a)=aW\sigma
(W)\sigma ^{2}(W)\cdots $ is a \emph{fixed point} of $\sigma $. Such an
infinite fixed points is called a \emph{pure morphic} sequence. If an
infinite word is an image under a coding of a fixed point, we say that it is
\emph{morphic}. Similarly, if an infinite word is an image under a coding of
a fixed point of a uniform morphism, we call that it is \emph{automatic}.
(for details, ref. \cite{AS03}).

The Thue-Morse sequence is a pure 2-automatic sequence over two letters. The
Cantor sequence is a pure 3-automatic sequence over two letters, but it is
non-primitive \cite{AS03}.

\fi
Let $\mathcal{A}=\{a,b,c,d\}$ be a four-letter alphabet. Define the endomorphism $\tau$ on $\mathcal{A}^{*}$ by $\tau: a\mapsto ab, b\mapsto cb, c\mapsto ad, d\mapsto cd$ and the coding $\rho$ from $\mathcal{A}$ to $\{0,1\}$ by $a\mapsto1, b\mapsto1, c\mapsto0, d\mapsto0$. The
\emph{regular paperfolding} sequence is given by%
\begin{equation*}
\mathbf{f:}=f_{0}f_{1}f_{2}\cdots =\rho (\tau ^{\infty
}(a))=110110011100100\cdots
\end{equation*}
Denote the generating function of the regular paperfolding sequence by $F(z):=\sum_{i=0}^{\infty }f_{i}z^{i}$. Let $b\geq 2$ be an integer, then the \emph{regular paperfolding number} is
defined as follow
\begin{equation*}
\xi _{\mathbf{f},b}:=F(\frac{1}{b})=\sum_{i\geq 0}\frac{f_{i}}{b^{i}}=1+\frac{1}{b}+\frac{1%
}{b^{3}}+\frac{1}{b^{4}}+\frac{1}{b^{7}}+\frac{1}{b^{8}}+\cdots.
\end{equation*}%
For details, see \cite{AS03,D82-1,D82-2,D82-3}.
By Theorem \ref{proposition1},  in order to determine $\mu(\xi _{\mathbf{f},b})$, we need to calculate its Hankel determinants $H_{n}(F)$.
%Fortunately, there are some famous automatic sequences and their corresponding Hankel determinant have been studied. They are the Thue-Morse sequence, the Cantor sequence and the paperfolding sequence.

In 1998, Allouche, Peyri\`{e}re, Wen and Wen \cite{APWW98} studied the Hankel determinants of the Thue-Morse sequence, which is the fixed point of the endomorphism $\sigma: 1\mapsto 1-1, -1\mapsto -11$.
%$\sigma$ over the alphabet $\{-1,1\}$, where $\sigma$ is defined as follow $\sigma: 1\mapsto 1-1, -1\mapsto -11$.
And they proved that the Hankel determinants of the Thue-Morse sequence are all nonzero.
In 2012, with the help of C++ program, Coons and Vrbik \cite{CV12} showed
that the Hankel determinant  $H_{n}(F)\neq0$  of the regular paperfolding sequence for $n\leq2^{13}+3$.
In 2013, Wen and Wu \cite{WW13} investigated the Hankel determinants of the Cantor sequence which is the fixed point of the endomorphism $\theta: 1\mapsto 101, 0\mapsto 000$.
%$\theta$ over the alphabet $\{0,1\}$, where $\theta$ is defined as follow $\theta: 1\mapsto 101, 0\mapsto 000$.
They also proved that the Hankel determinants of the Thue-Morse sequence are all nonzero.

It is observed that the Thue-Morse sequence and the Cantor sequence are fixed points of endomorphisms, while the paperfolding sequence is given by the image under a coding of a fixed point of a endomorphism. Because of this difference, it seems difficult to determine the Hankel determinant of regular paperfolding sequence.
In section 3, we will show the recurrent equations of the Hankel determinants $H_{n}(F)$%
. And the recurrent equations lead to the fact: for all $i\geq0$,
\begin{equation*}
H_{10i+1}(F)H_{10i+2}(F)\neq 0.  \tag{$\star$}
\end{equation*}

%\textcolor[rgb]{1.00,0.00,0.00}{This sequence $\mathbf{f}$ was studied in \cite{{D82-1}}, \cite{{D82-2}} and \cite{{D82-3}}.}
%Since there is a coding in the construction, the regular paperfolding
%sequence is not a pure morphic sequence \cite{AS03}.

Now, we turn to study the irrationality exponents of the regular paperfolding numbers. Before our study, there are some results already.  In 2009, Adamczewski and Rivoal \cite{AR09} proved
that $\mu (\xi _{\mathbf{f},b})\leq 5$. Then, in 2012, Coons and Vrbik \cite{AR09}
improved this estimate by the inequality $\mu (\xi _{\mathbf{f},b})\leq2.002075359\cdots $. Here,
we give the exact value of $\mu (\xi _{\mathbf{f},b})$.

\begin{corollary}
\label{THM1} For any integer $b\geq2$, $\mu(\xi_{\mathbf{f},b})=2$.
\end{corollary}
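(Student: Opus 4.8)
The plan is to deduce Corollary~\ref{THM1} directly from Theorem~\ref{proposition1} together with the nonvanishing statement $(\star)$; the only things that genuinely have to be done here are to put $F$ into the form required by Theorem~\ref{proposition1} and to choose the right subsequence $\{n_i\}_{i\ge 0}$.

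First I would establish a functional equation for $F$. Splitting $F(z)=\sum_{i\ge 0}f_i z^i$ according to the parity of the index gives $F(z)=\sum_n f_{2n}z^{2n}+z\sum_n f_{2n+1}z^{2n}$. From $\mathbf{f}=\rho(\tau^{\infty}(a))$ one reads off the two standard relations $f_{2n+1}=f_n$ and $f_{4n}=1,\ f_{4n+2}=0$ for all $n\ge 0$ (which I would check by iterating $\tau$ a couple of times and applying $\rho$, consistently with the word $110110011100100\cdots$ displayed above). Substituting these in, the odd part collapses to $zF(z^2)$ and the even part to $\sum_{k\ge 0}z^{4k}=\tfrac{1}{1-z^{4}}$, so
\begin{equation*}
F(z)=\frac{1}{1-z^{4}}+zF(z^{2}).
\end{equation*}
Hence $F$ has the shape $f(x)=A(x)/B(x)+C(x)f(x^{k})$ with $A(x)=1$, $B(x)=1-x^{4}$, $C(x)=x\in\mathbb{Z}[x]$ and $k=2$.

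Next I would take $n_i:=10i+1$ for $i\ge 0$. Then $H_{n_i}(F)=H_{10i+1}(F)$ and $H_{n_i+1}(F)=H_{10i+2}(F)$, so $(\star)$ is exactly the hypothesis $H_{n_i}(F)H_{n_i+1}(F)\ne 0$ of Theorem~\ref{proposition1}; moreover $\tfrac{n_{i+1}}{n_i}=\tfrac{10i+11}{10i+1}\to 1$, whence $\liminf_{i\to\infty}\tfrac{n_{i+1}}{n_i}=1$. Applying the ``moreover'' part of Theorem~\ref{proposition1} with an arbitrary integer $b\ge 2$ then yields $\mu\bigl(F(1/b)\bigr)=2$, that is $\mu(\xi_{\mathbf{f},b})=2$. (The condition $H_{n_i}(F)\ne 0$ also ensures, by the remarks following Theorem~\ref{proposition1}, that $\xi_{\mathbf{f},b}\notin\mathbb{Q}$, so that the irrationality exponent is meaningful; this is in any case guaranteed by the transcendence of automatic irrationals.)

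The substantive difficulty lies not in this corollary but in the input $(\star)$: setting up and solving the recurrences for $H_n(F)$ and proving that $H_n(F)\bmod 2$ is (eventually) periodic with period $10$, from which one reads off that $H_{10i+1}(F)$ and $H_{10i+2}(F)$ are odd and hence nonzero. Granting $(\star)$ and Theorem~\ref{proposition1}, the only points in the corollary that require any care are routine: that the chosen arithmetic progression $10i+1,\,10i+2$ does sit inside the set of indices with known nonvanishing Hankel determinants, and that $n_{i+1}/n_i\to 1$ — both immediate for $n_i=10i+1$.
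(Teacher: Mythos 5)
Your proposal is correct and follows exactly the paper's route: the paper's own proof of this corollary consists precisely of noting the functional equation $F(z)=\frac{1}{1-z^{4}}+zF(z^{2})$ and invoking Theorem~\ref{proposition1} together with $(\star)$. You have merely filled in the (routine) details the paper leaves implicit — the derivation of the functional equation from $f_{4n}=1$, $f_{4n+2}=0$, $f_{2n+1}=f_{n}$, the explicit choice $n_{i}=10i+1$, and the verification that $n_{i+1}/n_{i}\to 1$.
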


\begin{proof}
Note that the generating function $F(z)$ of the regular paperfolding
sequence $\mathbf{f}$ satisfies
\begin{equation*}
F(z)=\frac{1}{1-z^{4}}+zF(z^{2}).
\end{equation*}%
The result follows from Theorem \ref{proposition1} and the formula $(\star)$.
\end{proof}

The organization of the paper is as follows. In section 2, we give some
notations. In section 3, we present the recurrence equations of the Hankel
determinants $H_{n}(F)$. Then we prove the Hankel determinant sequence
(module $2$) is periodic with period 10. In the last section, we prove
Proposition \ref{proposition1}.

\section{Preliminaries}

In this section,  we will give some definitions and notations.

\begin{itemize}
\item Let $\mathbf{u}=\{u_{n}\}_{n\geq 0}$ be a complex number sequence, then the
\emph{$(p;m,n)$-order Hankel matrix} of the sequence $\mathbf{u}$ is defined
as follow
\begin{equation*}
\mathbf{u}_{m,n}^{p}=\left(
\begin{array}{cccc}
u_{p} & u_{p+1} & \cdots & u_{p+n-1} \\
u_{p+1} & u_{p+2} & \cdots & u_{p+n} \\
\vdots & \vdots & \ddots & \vdots \\
u_{p+m-1} & u_{p+m} & \cdots & u_{p+m+n-2}%
\end{array}%
\right) ,
\end{equation*}%
where $m,n\geq 1$ and $p\geq 0$. If $m=n$, the symbol $\mathbf{u}_{n}^{p}$ is always
used to stand for the Hankel matrix for short.

\item For any matrix $M$, its transposed matrix is denoted by $M^{t}$. If
the matrix $M$ is square, then its Hankel determinant is denoted by $|M|$ .

\item The $m\times n$ matrix with all entries equal to 1 (resp. 0) is
denoted by $\mathbf{1}_{m,n}$ (resp. $\mathbf{0}_{m,n}$).

\item For any $n\geq1$, let $\alpha(n)=(\alpha_{0},\alpha_{1},\cdots,%
\alpha_{n-1})$ and $\beta(n)=(\beta_{0},\beta_{1},\cdots,\beta_{n-1})$ be $%
1\times n$-row vectors with $\alpha_{2i}=\beta_{2i+1}=1,\alpha_{2i+1}=%
\beta_{2i}=0(0\leq i\leq \lfloor\frac{n}{2}\rfloor)$ respectively. For
example, $\alpha(5)=(1,0,1,0,1),\beta(4)=(0,1,0,1)$.

\item Define the matrixes $A_{m,n}=(r_{0},r_{1},\cdots,r_{n-1})$ and $%
B_{m,n}=(s_{0},s_{1},\cdots,s_{n-1})$, where $r_{2i}=s_{2i+1}=\alpha^{t}(m)$
and $r_{2i+1}=s_{2i}=\beta^{t}(m)$. If $m=n$, we simply write $A_{n}$ and $%
B_{n}$. For example,
\begin{equation*}
A_{4,3}= \left(
\begin{array}{ccc}
1 & 0 & 1 \\
0 & 1 & 0 \\
1 & 0 & 1 \\
0 & 1 & 0 \\
\end{array}
\right), B_{5}=\left(
\begin{array}{ccccc}
0 & 1 & 0 & 1 & 0 \\
1 & 0 & 1 & 0 & 1 \\
0 & 1 & 0 & 1 & 0 \\
1 & 0 & 1 & 0 & 1 \\
0 & 1 & 0 & 1 & 0 \\
\end{array}
\right).
\end{equation*}

\item Set the following determinant sequences
\begin{equation*}
\begin{array}{ll}
a_{n}:=|\mathbf{f}_{n}^{0}|, & b_{n}:=\left\vert
\begin{matrix}
\mathbf{f}_{n}^{0} & \alpha ^{t}(n) & \beta ^{t}(n) \\
\alpha (n) & 0 & 0 \\
\beta (n) & 0 & 0%
\end{matrix}%
\right\vert, \\
&  \\
c_{n}:= \left\vert
\begin{array}{cc}
\mathbf{f}_{n}^{0} & \alpha ^{t}(n) \\
\alpha (n) & 0%
\end{array}%
\right\vert , & d_{n}:=\left\vert
\begin{array}{cc}
\mathbf{f}_{n}^{0} & \beta ^{t}(n) \\
\beta (n) & 0%
\end{array}%
\right\vert, \\
&  \\
e_{n}:=\left\vert
\begin{array}{cc}
\mathbf{f}_{n}^{0} & \beta ^{t}(n) \\
\alpha (n) & 0%
\end{array}%
\right\vert, & g_{n}:=|\mathbf{f}_{n+1,n}^{0} ~\alpha ^{t}(n+1)|, \\
&  \\
h_{n}:=| \mathbf{f}_{n+1,n}^{0} \beta ^{t}(n+1)| , & x_{n}:=\left\vert
\begin{array}{ccc}
\mathbf{f}_{n+1,n}^{0} & \alpha ^{t}(n+1) & \beta ^{t}(n+1) \\
\alpha (n) & 0 & 0%
\end{array}
\right\vert , \\
\end{array}
\end{equation*}

$~\quad y_{n}:=\left\vert
\begin{array}{ccc}
\mathbf{f}_{n+1,n}^{0} & \alpha ^{t}(n+1) & \beta ^{t}(n+1) \\
\beta (n) & 0 & 0%
\end{array}
\right\vert.$

\item $U(n)=(e_{1},e_{3},\cdots,e_{2[\frac{n+1}{2}]-1},e_{2},e_{4},%
\cdots,e_{2[\frac{n}{2}]})$, where $e_{j}$ is the $j$-th unit column vector
of order $n$, that is, the column vector with $1$ as its $j$-th entry and
zero elsewhere. If no confusion can occur, we simply write $U$.

\item Unless otherwise stated, the symbol $\equiv$ stands for equality
modulo $2$ in the whole paper.
\end{itemize}

\section{Hankel determinants}

In this section, we will give the recurrence equations of the Hankel
determinants $H_{n}(F)$. By these equations, we prove the Hankel determinant
sequence (module $2$) is periodic. The results are described in Lemma \ref%
{lemma1} and Proposition \ref{lemma2} respectively.

Notice that, the regular paperfolding sequence $\mathbf{f}%
=f_{0}f_{1}f_{2}\cdots$ can be generated by the following recurrence formula
\cite{AR09}:
\begin{equation}  \label{formula1}
f_{4n}=1,f_{4n+2}=0,f_{2n+1}=f_{n}.
\end{equation}

By this formula, we have the following important lemma.

\begin{lemma}
\label{lemma1} For any $n\geq1$,
\begin{eqnarray*}
(1)~\qquad
a_{2n}&=&(-1)^{n+1}(b_{n}^{2}+2c_{n}e_{n}+2d_{n}e_{n}-a_{n}^{2})\equiv
a_{n}+b_{n}, \\
(2)~\quad a_{2n+1}&=&(-1)^{n+1}(2x_{n}y_{n}-g_{n}^{2}-h_{n}^{2})\equiv
g_{n}+h_{n}, \\
(3)~\qquad b_{2n}&=&(-1)^{n+1}(c_{n}+2e_{n}+d_{n})^{2}\equiv c_{n}+d_{n}, \\
(4)~\quad b_{2n+1}&=&(-1)^{n+1}2(x_{n}+y_{n})^{2}\equiv 0, \\
(5)~\qquad c_{2n}&=&(-1)^{n}2\big(b_{n}^{2}+(c_{n}+e_{n})(d_{n}+e_{n})\big)%
\equiv 0, \\
(6)~\quad c_{2n+1}&=&(-1)^{n}\big(4x_{n}y_{n}+(g_{n}+h_{n})^{2}\big)\equiv
g_{n}+h_{n}, \\
(7)~\qquad d_{2n}&=&(-1)^{n}\big(%
2b_{n}^{2}+(c_{n}+e_{n})^{2}+(d_{n}+e_{n})^{2}\big)\equiv c_{n}+d_{n}, \\
(8)~\quad d_{2n+1}&=&(-1)^{n}(x_{n}+y_{n})^{2}\equiv x_{n}+y_{n}, \\
(9)~\qquad e_{2n}&=&(-1)^{n}\big(%
b_{n}(c_{n}+d_{n}-2e_{n})+a_{n}(c_{n}+d_{n}+2e_{n})\big) \\
&\equiv & (a_{n}+b_{n})(c_{n}+d_{n}), \\
(10)~\quad e_{2n+1}&=&(-1)^{n}(g_{n}-h_{n})(x_{n}+y_{n})\equiv
(g_{n}+h_{n})(x_{n}+y_{n}), \\
(11)~\qquad g_{2n}&=&(-1)^{n}\big(%
c_{n}y_{n}+e_{n}x_{n}-e_{n}y_{n}-d_{n}x_{n}+a_{n}(g_{n}+h_{n})\big) \\
& \equiv & a_{n}(g_{n}+h_{n})+x_{n}(d_{n}+e_{n})+y_{n}(c_{n}+e_{n}), \\
(12)~\quad g_{2n+1}&=&(-1)^{n+1}\big(%
c_{n+1}y_{n}+e_{n+1}x_{n}-e_{n+1}y_{n}-d_{n+1}x_{n}+a_{n+1}(g_{n}+h_{n})\big)
\\
&\equiv & a_{n+1}(g_{n}+h_{n})+x_{n}(d_{n+1}+e_{n+1})+y_{n}(c_{n+1}+e_{n+1}),
\\
(13)~\qquad h_{2n}&=&(-1)^{n}\big(%
b_{n}(y_{n}-x_{n})+g_{n}(c_{n}+e_{n})+h_{n}(d_{n}+e_{n})\big) \\
&\equiv & g_{n}(c_{n}+e_{n})+h_{n}(d_{n}+e_{n})+b_{n}(x_{n}+y_{n}), \\
(14)~\quad h_{2n+1}&=&(-1)^{n+1}\big(%
b_{n+1}(y_{n}-x_{n})+g_{n}(c_{n+1}+e_{n+1})+h_{n}(d_{n+1}+e_{n+1})\big) \\
&\equiv & g_{n}(c_{n+1}+e_{n+1})+h_{n}(d_{n+1}+e_{n+1})+b_{n+1}(x_{n}+y_{n}),
\\
(15)~\qquad x_{2n}&=&(-1)^{n}\big(%
2b_{n}(y_{n}-x_{n})+(g_{n}+h_{n})(c_{n}+d_{n}+2e_{n})\big) \\
& \equiv &(g_{n}+h_{n})(c_{n}+d_{n}), \\
(16)~\quad x_{2n+1}&=&(-1)^{n+1}\big(%
2b_{n+1}(y_{n}-x_{n})+(g_{n}+h_{n})(c_{n+1}+d_{n+1}+2e_{n+1})\big) \\
&\equiv & (g_{n}+h_{n})(c_{n+1}+d_{n+1}), \\
(17)~\qquad y_{2n}&=&(-1)^{n+1}(x_{n}+y_{n})(c_{n}-d_{n})\equiv
(x_{n}+y_{n})(c_{n}+d_{n}), \\
(18)~\quad y_{2n+1}&=&(-1)^{n}(x_{n}+y_{n})(c_{n+1}-d_{n+1})\equiv
(x_{n}+y_{n})(c_{n+1}+d_{n+1}).
\end{eqnarray*}
\end{lemma}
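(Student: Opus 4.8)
The plan is to exploit the 2-adic self-similarity of the paperfolding recurrence \eqref{formula1}: since $f_{2m+1}=f_m$ and the even-indexed subsequence $f_0,f_2,f_4,\dots$ is the periodic pattern $1,0,1,0,\dots$, the entry $f_{i+j-2}$ of any Hankel-type matrix depends only on the parities of $i$ and $j$. When $i\equiv j\pmod 2$ the index $i+j-2$ is even and the entry is determined by $i+j\bmod 4$, producing a checkerboard $0/1$ block, which is exactly one of the matrices $A_{m,n}$ or $B_{m,n}$ introduced in Section 2; when $i\not\equiv j\pmod 2$ the entry equals $f_{(i+j-3)/2}$, i.e.\ a shrunk copy of $\mathbf{f}$ itself. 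Conjugating by the permutation matrix $U$, which lists the odd-indexed rows and columns before the even-indexed ones, therefore turns $\mathbf{f}_{2n}^{0}$ into a block matrix of the shape $\left(\begin{smallmatrix} A_n & \mathbf{f}_n^0 \\ (\mathbf{f}_n^0)^{t} & B_n\end{smallmatrix}\right)$ and $\mathbf{f}_{2n+1}^{0}$ into one of the shape $\left(\begin{smallmatrix} A_{n+1} & \mathbf{f}_{n+1,n}^0 \\ (\mathbf{f}_{n+1,n}^0)^{t} & B_n\end{smallmatrix}\right)$; extending $U$ to act on the $\alpha,\beta$ border rows/columns does the analogous thing to the bordered and rectangular matrices defining $b_n,c_n,d_n,e_n,g_n,h_n,x_n,y_n$. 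The off-by-one between the two diagonal block sizes is precisely why each identity splits into a ``$2n$'' case and a ``$2n+1$'' case, and why the odd cases are expressed through the index $n+1$.

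Once these block forms are recorded, I would reduce each of the eighteen determinants by elementary row/column operations followed by a Schur-complement step. The diagonal blocks $A_n,B_n$ have rank $2$, spanned by $\alpha^{t}(n)$ and $\beta^{t}(n)$; clearing them against the Hankel block and against the existing borders is exactly what manufactures the vectors $\alpha(n),\beta(n)$ and the bordered determinants $b_n,c_n,d_n,e_n$ (and, in the odd cases, $g_n,h_n,x_n,y_n$) on the right-hand sides. Taking the Schur complement against $\mathbf{f}_n^{0}$ (or $\mathbf{f}_{n+1,n}^{0}$) then collapses everything to the stated polynomial expressions such as $b_n^{2}+2c_ne_n+2d_ne_n-a_n^2$, $2x_ny_n-g_n^{2}-h_n^{2}$, $(c_n+2e_n+d_n)^2$, and so on; the prefactor $(-1)^{n}$ or $(-1)^{n+1}$ comes from the block transposition that brings the Hankel sub-block into anti-diagonal position, whose sign is $(-1)^{n^2}=(-1)^n$, together with the finitely many further row/column swaps used in the reduction. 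The reason the list of auxiliary sequences $a,b,c,d,e,g,h,x,y$ has exactly these nine members is that this is the smallest family closed under ``doubling the index'': every quantity appearing on a right-hand side is again one of the nine, evaluated at $n$ or at $n+1$.

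The congruences modulo $2$ are then immediate from the exact formulas: every term with an even integer coefficient ($2c_ne_n$, $2x_ny_n$, $4x_ny_n$, $2b_n^2$, \dots) vanishes, each square reduces to itself since $t^2\equiv t\pmod 2$, and each difference becomes a sum; collecting terms yields the right-hand congruences in (1)--(18). I expect the genuine difficulty to be bookkeeping rather than conceptual: for $g_n,h_n,x_n,y_n$ the underlying matrices are rectangular and their borders asymmetric ($\alpha$ on one side, $\beta$ on the other), so $U$ no longer acts identically on rows and columns, the parity of $n$ interacts delicately with the single extra row of $\mathbf{f}_{n+1,n}^{0}$ and with the mismatched border vectors, and all the signs must be tracked consistently across the eighteen identities simultaneously — this joint case analysis, carried out carefully, is where essentially all the work lies.
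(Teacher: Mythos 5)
Your opening step is exactly the paper's: the recurrence $f_{4n}=1$, $f_{4n+2}=0$, $f_{2n+1}=f_{n}$ makes the entry $f_{i+j-2}$ depend only on the parities of $i,j$ (and on $i+j\bmod 4$ when both have the same parity), so conjugating by $U$ turns $\mathbf{f}_{2n}^{0}$ and $\mathbf{f}_{2n+1}^{0}$ into the block matrices $\left(\begin{smallmatrix} A_{n} & \mathbf{f}_{n}^{0} \\ \mathbf{f}_{n}^{0} & B_{n}\end{smallmatrix}\right)$ and $\left(\begin{smallmatrix} A_{n+1} & \mathbf{f}_{n+1,n}^{0} \\ \mathbf{f}_{n,n+1}^{0} & B_{n}\end{smallmatrix}\right)$; your explanation of the even/odd dichotomy and of why the nine sequences $a,\dots,y$ form a closed family is also on target. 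The problem is the reduction step. A Schur complement against $\mathbf{f}_{n}^{0}$ is not available: whether $\mathbf{f}_{n}^{0}$ is invertible is precisely what the whole argument is trying to control (and $a_{n}$ is even for $n\equiv 3,4,6,7\pmod{10}$, so it may well vanish), and even on the generic locus a Schur complement yields $a_{2n}$ as $a_{n}$ times a determinant involving $(\mathbf{f}_{n}^{0})^{-1}$ — which is not the shape of the asserted identity, since $b_{n}^{2}+2c_{n}e_{n}+2d_{n}e_{n}-a_{n}^{2}$ is not divisible by $a_{n}$ as a polynomial in the matrix entries. One could in principle rescue this via Jacobi's theorem on minors of the adjugate plus a polynomial-identity/genericity argument, but you invoke neither. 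What actually produces the stated bilinear combinations of bordered minors is different: one enlarges the determinant by appending rows and columns carrying an identity block, uses the appended rows together with the rank-two decompositions $A_{n}=\alpha^{t}(n)\alpha(n)+\beta^{t}(n)\beta(n)$ and $B_{n}=\alpha^{t}(n)\beta(n)+\beta^{t}(n)\alpha(n)$ to annihilate the diagonal blocks at the cost of new border columns $-\alpha^{t},-\beta^{t}$, and then applies the generalized Laplace expansion along the first $n+2$ (or $n+3$, etc.) rows; the resulting sum over complementary minors is exactly $\pm\bigl(b_{n}^{2}+2c_{n}e_{n}+2d_{n}e_{n}-a_{n}^{2}\bigr)$ and its seventeen analogues, with the signs $(-1)^{n}$, $(-1)^{n+1}$ read off from the column positions in that expansion.

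Beyond this, the proposal proves nothing concrete: you explicitly defer all eighteen computations as ``bookkeeping,'' but the content of the lemma \emph{is} those eighteen exact identities with their exact signs and coefficients — the subsequent mod-$2$ periodicity argument (Proposition \ref{lemma2}) and hence the nonvanishing statement $(\star)$ hinge on which cross terms carry a factor of $2$ and which do not. A correct plan for the block decomposition does not by itself certify, for instance, that $c_{2n}$ acquires an overall factor $2$ while $d_{2n}$ does not, or that $g_{2n+1}$ and $h_{2n+1}$ mix the index-$n$ and index-$(n+1)$ quantities in the particular way claimed. As written, the proposal is an outline with one technically invalid step rather than a proof.
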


\begin{proof}
Let $M=(m_{i,j})_{1\leq i,j\leq n}$ be any $n\times n$-matrix. By the definition of the matrix $U$, we can check easily the following formula
\begin{equation}  \label{equation1}
U^{t}MU=\left(
\begin{array}{cc}
(m_{2i-1,2j-1})_{\begin {subarray}{c} 1\leq i\leq\mu \\1\leq j\leq\mu\end
{subarray}} & (m_{2i-1,2j})_{\begin {subarray}{c} 1\leq i\leq\mu \\1\leq
j\leq\nu\end {subarray}} \\
(m_{2i,2j-1})_{\begin {subarray}{c} 1\leq i\leq\nu \\1\leq j\leq\mu\end
{subarray}} & (m_{2i,2j})_{\begin {subarray}{c} 1\leq i\leq\nu \\1\leq
j\leq\nu\end {subarray}} \\
\end{array}%
\right) ,
\end{equation}%
where $\mu =[\frac{1}{2}(n+1)]$ and $\nu =[\frac{1}{2}n]$. Hence, by the
formulae (\ref{formula1}) and (\ref{equation1}),
\begin{equation}
U^{t}\mathbf{f}_{2n}^{0}U=\left(
\begin{array}{cc}
A_{n} & \mathbf{f}_{n}^{0} \\
\mathbf{f}_{n}^{0} & B_{n}%
\end{array}%
\right)  \label{formula2}
\end{equation}%
and
\begin{equation}
U^{t}\mathbf{f}_{2n+1}^{0}U=\left(
\begin{array}{cc}
A_{n+1} & \mathbf{f}_{n+1,n}^{0} \\
\mathbf{f}_{n,n+1}^{0} & B_{n}%
\end{array}%
\right)  \label{formula3}
\end{equation}

(1) ~First, by the formula (\ref{formula2}) and note that $|U||U^{t}|=1$, we have
\begin{eqnarray*}
\nonumber a_{2n}=|\mathbf{f}_{2n}^{0}|=|U^{t}\mathbf{f}_{2n}^{0}U|=\left\vert
\begin{array}{cc}
A_{n} & \mathbf{f}_{n}^{0} \\
\mathbf{f}_{n}^{0} & B_{n}%
\end{array}%
\right\vert .
\end{eqnarray*}

Then, we add four rows and columns which do not change the determinant. By the elementary operation, we will get a new block matrix which have much zero blocks. That is,
\begin{eqnarray*}
\left\vert
\begin{array}{cc}
A_{n} & \mathbf{f}_{n}^{0} \\
\mathbf{f}_{n}^{0} & B_{n}%
\end{array}%
\right\vert &=&\left\vert
\begin{array}{cc;{2pt/2pt}cccc}
A_{n} & \mathbf{f}_{n}^{0} & \mathbf{0}_{n,1} & \mathbf{0}_{n,1} & \mathbf{0}%
_{n,1} & \mathbf{0}_{n,1} \\
\mathbf{f}_{n}^{0} & B_{n} & \mathbf{0}_{n,1} & \mathbf{0}_{n,1} & \mathbf{0}%
_{n,1} & \mathbf{0}_{n,1} \\
\hdotsfor{6}\\
\alpha (n) & \mathbf{0}_{1,n} & 1 & 0 & 0 & 0 \\
\beta (n) & \mathbf{0}_{1,n} & 0 & 1 & 0 & 0 \\
\mathbf{0}_{1,n} & \beta (n) & 0 & 0 & 1 & 0 \\
\mathbf{0}_{1,n} & \alpha (n) & 0 & 0 & 0 & 1%
\end{array}%
\right\vert\\
\nonumber&=&\left\vert
\begin{array}{cc;{2pt/2pt}cccc}
\mathbf{0}_{n} & \mathbf{f}_{n}^{0} & -\alpha (n)^{t} & -\beta (n)^{t} &
\mathbf{0}_{n,1} & \mathbf{0}_{n,1} \\
\mathbf{f}_{n}^{0} & \mathbf{0}_{n} & \mathbf{0}_{n,1} & \mathbf{0}_{n,1} &
-\alpha (n)^{t} & -\beta (n)^{t} \\
\hdotsfor{6}\\
\alpha (n) & \mathbf{0}_{1,n} & 1 & 0 & 0 & 0 \\
\beta (n) & \mathbf{0}_{1,n} & 0 & 1 & 0 & 0 \\
\mathbf{0}_{1,n} & \beta (n) & 0 & 0 & 1 & 0 \\
\mathbf{0}_{1,n} & \alpha (n) & 0 & 0 & 0 & 1%
\end{array}%
\right\vert  \\
\nonumber&=&\left\vert
\begin{array}{ccc;{2pt/2pt}ccc}
\mathbf{0}_{n} & \mathbf{0}_{n,1} & \mathbf{0}_{n,1} & \mathbf{f}_{n}^{0} &
-\alpha (n)^{t} & -\beta (n)^{t} \\
\mathbf{0}_{1,n} & 1 & 0 & \beta (n) & 0 & 0 \\
\mathbf{0}_{1,n} & 0 & 1 & \alpha (n) & 0 & 0 \\
\hdotsfor{6}\\
\mathbf{f}_{n}^{0} & -\alpha (n)^{t} & -\beta (n)^{t} & \mathbf{0}_{n} &
\mathbf{0}_{n,1} & \mathbf{0}_{n,1} \\
\alpha (n) & 0 & 0 & \mathbf{0}_{1,n} & 1 & 0 \\
\beta (n) & 0 & 0 & \mathbf{0}_{1,n} & 0 & 1 \\
\end{array}%
\right\vert . \\
\end{eqnarray*}

At last, by Laplace's expansion along the first $n+2$ rows, we have
\begin{eqnarray*}
a_{2n} &=&(-1)^{n+1}\cdot \Bigg(-|\mathbf{f}_{n}^{0}|^{2}+2\cdot \left\vert
\begin{array}{cc}
\mathbf{f}_{n}^{0} & \alpha ^{t}(n) \\
\alpha (n) & 0 \\
\end{array}%
\right\vert \cdot \left\vert
\begin{array}{cc}
\mathbf{f}_{n}^{0} & \beta ^{t}(n) \\
\alpha (n) & 0 \\
\end{array}%
\right\vert  \\
&&+2\cdot \left\vert
\begin{array}{cc}
\mathbf{f}_{n}^{0} & \beta ^{t}(n) \\
\beta (n) & 0 \\
\end{array}%
\right\vert \cdot \left\vert
\begin{array}{cc}
\mathbf{f}_{n}^{0} & \beta ^{t}(n) \\
\alpha (n) & 0 \\
\end{array}%
\right\vert +\left\vert
\begin{array}{ccc}
\mathbf{f}_{n}^{0} & \alpha ^{t}(n) & \beta ^{t}(n) \\
\alpha (n) & 0 & 0 \\
\beta (n) & 0 & 0 \\
\end{array}%
\right\vert ^{2}\Bigg) \\
&=&(-1)^{n+1}\left(-a_{n}^{2}+2c_{n}e_{n}+2d_{n}e_{n}+b_{n}^{2}\right)\equiv
a_{n}+b_{n}.
\end{eqnarray*}

We will omit the details of the proofs of the other assertions, since they can be proved in the same way: first, we turn the matrix into a block matrix by the formulae (\ref{formula2}) and (\ref{formula3}). Then, adding finite rows and columns and using the elementary operation again and again, we will get more zero blocks. At last, by the Laplace's expansion, we can calculate the determinant.

(2) Similarly, by the formula (\ref{formula3}),
\begin{eqnarray*}
a_{2n+1}&=&|\mathbf{f}_{2n+1}^{0}| = |U^{t}\mathbf{f}_{2n+1}^{0}U| = \left|
\begin{array}{ccc}
A_{n+1} & \mathbf{f}_{n+1,n}^{0} &  \\
\mathbf{f}_{n,n+1}^{0} & B_{n} &  \\
\end{array}
\right| \\
&=& \left|
\begin{array}{ccc;{2pt/2pt}ccc}
\mathbf{0}_{n+1} & \mathbf{0}_{n+1,1} & \mathbf{0}_{n+1,1} & \mathbf{f}%
_{n+1,n}^{0} & -\alpha(n+1)^{t} & -\beta(n+1)^{t} \\
\mathbf{0}_{1,n+1} & 1 & 0 & \beta(n) & 0 & 0 \\
\mathbf{0}_{1,n+1} & 0 & 1 & \alpha(n) & 0 & 0 \\
\hdotsfor{6}\\
\mathbf{f}_{n,n+1}^{0} & -\alpha(n)^{t} & -\beta(n)^{t} & \mathbf{0}_{n} &
\mathbf{0}_{n,1} & \mathbf{0}_{n,1} \\
\alpha(n+1) & 0 & 0 & \mathbf{0}_{1,n} & 1 & 0 \\
\beta(n+1) & 0 & 0 & \mathbf{0}_{1,n} & 0 & 1 \\
\end{array}
\right| \\
&=& (-1)^{n+1}\cdot\Big(-\left|
\begin{array}{cc}
\mathbf{f}_{n+1,n}^{0} & \alpha^{t}(n+1) \\
\end{array}
\right|^{2} -\left|
\begin{array}{cc}
\mathbf{f}_{n+1,n}^{0} & \beta^{t}(n+1) \\
\end{array}
\right|^{2}\\
&&+\setlength{\arraycolsep}{0.9pt}
2\cdot\left|
\begin{array}{ccc}
\mathbf{f}_{n+1,n}^{0} & \alpha^{t}(n+1) & \beta^{t}(n+1) \\
\alpha(n) & 0 & 0 \\
\end{array}
\right|\cdot\left|
\begin{array}{ccc}
\mathbf{f}_{n+1,n}^{0} & \alpha^{t}(n+1) & \beta^{t}(n+1) \\
\beta(n) & 0 & 0 \\
\end{array}
\right|\Big) \\
&=&(-1)^{n+1}(-g_{n}^{2}+h_{n}^{2}+2x_{n}y_{n})\equiv g_{n}+h_{n}.
\end{eqnarray*}

%\iffalse

(3)  By the formula (\ref{formula2}),
\begin{eqnarray*}
b_{2n}&=&\left\vert
\begin{array}{ccc}
\mathbf{f}_{2n}^{0} & \alpha ^{t}(2n) & \beta ^{t}(2n) \\
\alpha (2n) & 0 & 0 \\
\beta (2n) & 0 & 0%
\end{array}%
\right\vert =\left\vert
\begin{array}{cccc}
A_{n} & \mathbf{0}_{n,1} & \mathbf{f}_{n}^{0} & \mathbf{1}_{n,1} \\
\mathbf{0}_{1,n} & 0 & \mathbf{1}_{1,n} & 0 \\
\mathbf{f}_{n}^{0} & \mathbf{1}_{n,1} & B_{n} & \mathbf{0}_{n,1} \\
\mathbf{1}_{1,n} & 0 & \mathbf{0}_{1,n} & 0%
\end{array}%
\right\vert  \\
&=&\left\vert
\begin{array}{cccc;{2pt/2pt}cccc}
\mathbf{0}_{n} & \mathbf{0}_{n,1} & \mathbf{0}_{n,1} & \mathbf{0}_{n,1} &
\mathbf{f}_{n}^{0} & \mathbf{1}_{n,1} & -\alpha (n)^{t} & -\beta (n)^{t} \\
\mathbf{0}_{1,n} & 0 & 0 & 0 & \mathbf{1}_{1,n} & 0 & 0 & 0 \\
\mathbf{0}_{1,n} & 0 & 1 & 0 & \beta (n) & 0 & 0 & 0 \\
\mathbf{0}_{1,n} & 0 & 0 & 1 & \alpha (n) & 0 & 0 & 0 \\
\hdotsfor{8}\\
\mathbf{f}_{n}^{0} & \mathbf{1}_{n,1} & -\alpha (n)^{t} & -\beta (n)^{t} &
\mathbf{0}_{n} & \mathbf{0}_{n,1} & \mathbf{0}_{n,1} & \mathbf{0}_{n,1} \\
\mathbf{1}_{1,n} & 0 & 0 & 0 & \mathbf{0}_{1,n} & 0 & 0 & 0 \\
\alpha (n) & 0 & 0 & 0 & \mathbf{0}_{1,n} & 0 & 1 & 0 \\
\beta (n) & 0 & 0 & 0 & \mathbf{0}_{1,n} & 0 & 0 & 1%
\end{array}%
\right\vert  \\
&=&(-1)^{n+1}\cdot \left\vert
\begin{array}{cc}
\mathbf{f}_{n}^{0} & \mathbf{1}_{n,1} \\
\mathbf{1}_{1,n} & 0 \\
\end{array}
\right\vert ^{2} \\
&=&(-1)^{n+1}(c_{n}+2e_{n}+d_{n})^{2}\equiv c_{n}+d_{n}.
\end{eqnarray*}

(4) By the formula (\ref{formula3}),
\begin{eqnarray*}
b_{2n+1}&=&\setlength{\arraycolsep}{0.9pt}
\left\vert
\begin{array}{ccc}
\mathbf{f}_{2n+1}^{0} & \alpha ^{t}(2n+1) & \beta ^{t}(2n+1) \\
\alpha (2n+1) & 0 & 0 \\
\beta (2n+1) & 0 & 0%
\end{array}%
\right\vert =\left\vert
\begin{array}{cccc}
A_{n+1} & \mathbf{0}_{n+1,1} & \mathbf{f}_{n+1,n}^{0} & \mathbf{1}_{n+1,1}
\\
\mathbf{0}_{1,n+1} & 0 & \mathbf{1}_{1,n} & 0 \\
\mathbf{f}_{n,n+1}^{0} & \mathbf{1}_{n,1} & B_{n} & \mathbf{0}_{n,1} \\
\mathbf{1}_{1,n+1} & 0 & \mathbf{0}_{1,n} & 0%
\end{array}%
\right\vert  \\
\end{eqnarray*}
\begin{eqnarray*}
&=&\setlength{\arraycolsep}{0.9pt}
\left\vert
\begin{array}{cccc;{2pt/2pt}cccc}
\mathbf{0}_{n+1} & \mathbf{0}_{n+1,1} & \mathbf{0}_{n+1,1} & \mathbf{0}%
_{n+1,1} & \mathbf{f}_{n+1,n}^{0} & \mathbf{1}_{n+1,1} & -\alpha (n+1)^{t} &
-\beta (n+1)^{t} \\
\mathbf{0}_{1,n+1} & 0 & 0 & 0 & \mathbf{1}_{1,n} & 0 & 0 & 0 \\
\mathbf{0}_{1,n+1} & 0 & 1 & 0 & \beta (n) & 0 & 0 & 0 \\
\mathbf{0}_{1,n+1} & 0 & 0 & 1 & \alpha (n) & 0 & 0 & 0 \\
\hdotsfor{8}\\
\mathbf{f}_{n,n+1}^{0} & \mathbf{1}_{n,1} & -\alpha (n)^{t} & -\beta (n)^{t}
& \mathbf{0}_{n} & \mathbf{0}_{n,1} & \mathbf{0}_{n,1} & \mathbf{0}_{n,1} \\
\mathbf{1}_{1,n+1} & 0 & 0 & 0 & \mathbf{0}_{1,n} & 0 & 0 & 0 \\
\alpha (n+1) & 0 & 0 & 0 & \mathbf{0}_{1,n} & 0 & 1 & 0 \\
\beta (n+1) & 0 & 0 & 0 & \mathbf{0}_{1,n} & 0 & 0 & 1 \\
\end{array}%
\right\vert  \\
&=&(-1)^{n+1}\cdot 2\cdot \left\vert
\begin{array}{ccc}
\mathbf{f}_{n+1,n}^{0} & \alpha ^{t}(n+1) & \beta ^{t}(n+1) \\
\mathbf{1}_{1,n} & 0 & 0 \\
\end{array}
\right\vert ^{2}\\
&=&(-1)^{n+1}2(x_{n}+y_{n})^{2}\equiv 0.
\end{eqnarray*}

(5) By the formula (\ref{formula2}),
\begin{eqnarray*}
c_{2n}&=&\left\vert
\begin{array}{cc}
\mathbf{f}_{2n}^{0} & \alpha ^{t}(2n)  \\
\alpha (2n) & 0  \\
\end{array}
\right\vert =\left\vert
\begin{array}{ccc}
A_{n} & \mathbf{f}_{n}^{0} & \mathbf{1}_{n,1}\\
\mathbf{f}_{n}^{0} & B_{n} & \mathbf{0}_{n,1} \\
\mathbf{1}_{1,n} & \mathbf{0}_{1,n} & 0
\end{array}
\right\vert  \\
&=&\left\vert
\begin{array}{ccc;{2pt/2pt}cccc}
\mathbf{0}_{n} & \mathbf{0}_{n,1} & \mathbf{0}_{n,1}  & \mathbf{f}_{n}^{0} & \mathbf{1}_{n,1} & -\alpha (n)^{t} & -\beta (n)^{t} \\
\mathbf{0}_{1,n} & 1 & 0 & \beta (n) & 0 & 0 & 0 \\
\mathbf{0}_{1,n} & 0 & 1 & \alpha (n) & 0 & 0 & 0 \\
\hdotsfor{7}\\
\mathbf{f}_{n}^{0} & -\alpha (n)^{t} & -\beta (n)^{t} & \mathbf{0}_{n} & \mathbf{0}_{n,1} & \mathbf{0}_{n,1} & \mathbf{0}_{n,1} \\
\mathbf{1}_{1,n} & 0 & 0 & \mathbf{0}_{1,n} & 0 & 0 & 0 \\
\alpha (n) & 0 & 0 & \mathbf{0}_{1,n} & 0 & 1 & 0 \\
\beta (n) & 0 & 0 & \mathbf{0}_{1,n} & 0 & 0 & 1 \\
\end{array}
\right\vert  \\
&=&\setlength{\arraycolsep}{0.9pt}
(-1)^{n}\cdot 2\cdot \left(\left\vert
\begin{array}{ccc}
\mathbf{f}_{n}^{0} & \alpha^{t}(n) & \beta^{t}(n) \\
\alpha(n) & 0 & 0 \\
\beta(n) & 0 & 0 \\
\end{array}
\right\vert ^{2}+\left\vert
\begin{array}{cc}
\mathbf{f}_{n}^{0} & \mathbf{1}_{n,1} \\
\alpha(n) & 0 \\
\end{array}
\right\vert\cdot\left\vert
\begin{array}{cc}
\mathbf{f}_{n}^{0} &  \mathbf{1}_{n,1} \\
\beta(n) & 0 \\
\end{array}
\right\vert\right)\\
&=&(-1)^{n}2\left(b_{n}^{2}+(c_{n}+e_{n})(d_{n}+e_{n})\right)\equiv 0.
\end{eqnarray*}

(6) By the formula (\ref{formula3}),
\begin{eqnarray*}
c_{2n+1}&=&\setlength{\arraycolsep}{0.9pt}
\left\vert
\begin{array}{cc}
\mathbf{f}_{2n+1}^{0} & \alpha^{t}(2n+1)  \\
\alpha(2n+1) & 0
\end{array}%
\right\vert =\left\vert
\begin{array}{ccc}
A_{n+1}& \mathbf{f}_{n+1,n}^{0} & \mathbf{1}_{n+1,1}\\
\mathbf{f}_{n,n+1}^{0}  & B_{n} & \mathbf{0}_{n,1} \\
\mathbf{1}_{1,n+1} & \mathbf{0}_{1,n} & 0
\end{array}
\right\vert  \\
&=&\setlength{\arraycolsep}{0.9pt}
\left\vert
\begin{array}{ccc;{2pt/2pt}cccc}
\mathbf{0}_{n+1} & \mathbf{0}_{n+1,1} & \mathbf{0}_{n+1,1} & \mathbf{f}_{n+1,n}^{0} & \mathbf{1}_{n+1,1} & -\alpha (n+1)^{t} & -\beta (n+1)^{t} \\
\mathbf{0}_{1,n+1} & 1 & 0 & \beta (n) & 0 & 0 & 0 \\
\mathbf{0}_{1,n+1} & 0 & 1 & \alpha (n) & 0 & 0 & 0 \\
\hdotsfor{7}\\
\mathbf{f}_{n,n+1}^{0}  & -\alpha (n)^{t} & -\beta (n)^{t}
& \mathbf{0}_{n} & \mathbf{0}_{n,1} & \mathbf{0}_{n,1}& \mathbf{0}_{n,1} \\
\mathbf{1}_{1,n+1} & 0 & 0 & \mathbf{0}_{1,n} & 0 & 0 & 0 \\
\alpha (n+1) & 0 & 0 & \mathbf{0}_{1,n} & 0 & 1 & 0 \\
\beta (n+1)  & 0 & 0 & \mathbf{0}_{1,n} & 0 & 0 & 1 \\
\end{array}
\right\vert  \\
\end{eqnarray*}
\begin{eqnarray*}
&=&(-1)^{n}\bigg( |
\mathbf{f}_{n+1,n}^{0}  \mathbf{1}_{n+1,1}
|^{2}+ \\
&& \setlength{\arraycolsep}{0.9pt}
4\cdot \left\vert
\begin{array}{ccc}
\mathbf{f}_{n+1,n}^{0} & \alpha ^{t}(n+1) & \beta ^{t}(n+1) \\
\alpha(n) & 0 & 0 \\
\end{array}
\right\vert \cdot \left\vert
\begin{array}{ccc}
\mathbf{f}_{n+1,n}^{0} & \alpha ^{t}(n+1) & \beta ^{t}(n+1) \\
\beta(n) & 0 & 0 \\
\end{array}
\right\vert\bigg)\\
&=&(-1)^{n}\left(4x_{n}y_{n}+(g_{n}+h_{n})^{2}\right)\equiv g_{n}+h_{n}.
\end{eqnarray*}

(7) By the formula (\ref{formula2}),
\begin{eqnarray*}
d_{2n}&=&\left\vert
\begin{array}{cc}
\mathbf{f}_{2n}^{0} & \beta ^{t}(2n)  \\
\beta (2n) & 0  \\
\end{array}
\right\vert=\left\vert
\begin{array}{ccc}
A_{n} & \mathbf{f}_{n}^{0} & \mathbf{0}_{n,1}\\
\mathbf{f}_{n}^{0} & B_{n} & \mathbf{1}_{n,1} \\
\mathbf{0}_{1,n} & \mathbf{1}_{1,n} & 0
\end{array}
\right\vert\\
&=&\left\vert
\begin{array}{cccc;{2pt/2pt}ccc}
\mathbf{0}_{n} & \mathbf{0}_{n,1} & \mathbf{0}_{n,1} & \mathbf{0}_{n,1} & \mathbf{f}_{n}^{0}  & -\alpha (n)^{t} & -\beta (n)^{t} \\
\mathbf{0}_{1,n} & 0& 0 & 0 & \mathbf{1}_{1,n} & 0 & 0  \\
\mathbf{0}_{1,n} & 0& 1 & 0 & \beta (n) & 0 & 0  \\
\mathbf{0}_{1,n} & 0& 0 & 1 & \alpha (n) & 0 & 0  \\
\hdotsfor{7}\\
\mathbf{f}_{n}^{0} & \mathbf{1}_{n,1} &  -\alpha (n)^{t} & -\beta (n)^{t} & \mathbf{0}_{n} & \mathbf{0}_{n,1} & \mathbf{0}_{n,1} \\
\alpha (n) & 0 & 0 & 0 & \mathbf{0}_{1,n}  & 1 & 0 \\
\beta (n) & 0 & 0 & 0 & \mathbf{0}_{1,n} & 0 & 1 \\
\end{array}
\right\vert  \\
&=&\setlength{\arraycolsep}{0.9pt}
(-1)^{n}\cdot \left(2\cdot \left\vert
\begin{array}{ccc}
\mathbf{f}_{n}^{0} & \alpha^{t}(n) & \beta^{t}(n) \\
\alpha(n) & 0 & 0 \\
\beta(n) & 0 & 0 \\
\end{array}
\right\vert ^{2}+\left\vert
\begin{array}{cc}
\mathbf{f}_{n}^{0} & \mathbf{1}_{n,1} \\
\alpha(n) & 0 \\
\end{array}
\right\vert^{2}+\left\vert
\begin{array}{cc}
\mathbf{f}_{n}^{0} &  \mathbf{1}_{n,1} \\
\beta(n) & 0 \\
\end{array}
\right\vert^{2}\right)\\
&=&(-1)^{n}\left(2b_{n}^{2}+(c_{n}+e_{n})^{2}+(d_{n}+e_{n})^{2}\right)\equiv c_{n}+d_{n} .
\end{eqnarray*}

(8) By the formula (\ref{formula3}),
\begin{eqnarray*}
d_{2n+1}&=&\left\vert
\begin{array}{cc}
\mathbf{f}_{2n+1}^{0} & \beta^{t}(2n+1)  \\
\beta(2n+1) & 0
\end{array}%
\right\vert  =\left\vert
\begin{array}{ccc}
A_{n+1}& \mathbf{f}_{n+1,n}^{0} & \mathbf{0}_{n+1,1}\\
\mathbf{f}_{n,n+1}^{0}  & B_{n} & \mathbf{1}_{n,1} \\
\mathbf{0}_{1,n+1} & \mathbf{1}_{1,n} & 0
\end{array}
\right\vert  \\
&=&\setlength{\arraycolsep}{0.9pt}
\left\vert
\begin{array}{cccc;{2pt/2pt}ccc}
\mathbf{0}_{n+1} & \mathbf{0}_{n+1,1} & \mathbf{0}_{n+1,1} & \mathbf{0}_{n+1,1}& \mathbf{f}_{n+1,n}^{0}  & -\alpha (n+1)^{t} & -\beta (n+1)^{t} \\
\mathbf{0}_{1,n+1} & 0 & 0 & 0 & \mathbf{1}_{1,n} & 0 & 0  \\
\mathbf{0}_{1,n+1} & 0 & 1 & 0 & \beta (n) & 0 & 0  \\
\mathbf{0}_{1,n+1} & 0 & 0 & 1 & \alpha (n) & 0 & 0  \\
\hdotsfor{7}\\
\mathbf{f}_{n,n+1}^{0} &\mathbf{1}_{n,1} & -\alpha (n)^{t} & -\beta (n)^{t}
& \mathbf{0}_{n} & \mathbf{0}_{n,1} & \mathbf{0}_{n,1} \\
\alpha (n+1) & 0 & 0 & 0 & \mathbf{0}_{1,n} & 1 & 0 \\
\beta (n+1)  & 0 & 0 & 0 & \mathbf{0}_{1,n} & 0 & 1 \\
\end{array}
\right\vert  \\
&=&(-1)^{n}\cdot \left\vert
\begin{array}{ccc}
\mathbf{f}_{n+1,n}^{0} & \alpha ^{t}(n+1) & \beta ^{t}(n+1) \\
\mathbf{1}_{1,n} & 0 & 0 \\
\end{array}
\right\vert ^{2}\\
&=&(-1)^{n}(x_{n}+y_{n})^{2}\equiv x_{n}+y_{n}.
\end{eqnarray*}

(9) By the formula (\ref{formula2}),
\begin{eqnarray*}
e_{2n}&=&\left\vert
\begin{array}{cc}
\mathbf{f}_{2n}^{0} & \beta ^{t}(2n)  \\
\alpha (2n) & 0  \\
\end{array}
\right\vert =\left\vert
\begin{array}{ccc}
A_{n} & \mathbf{f}_{n}^{0} & \mathbf{0}_{n,1}\\
\mathbf{f}_{n}^{0} & B_{n} & \mathbf{1}_{n,1} \\
\mathbf{1}_{1,n} & \mathbf{0}_{1,n} & 0
\end{array}
\right\vert  \\
\end{eqnarray*}
\begin{eqnarray*}
&=&\left\vert
\begin{array}{ccc;{2pt/2pt}cccc}
\mathbf{0}_{n} & \mathbf{0}_{n,1} & \mathbf{0}_{n,1} & \mathbf{f}_{n}^{0}   & \mathbf{0}_{n,1} & -\alpha (n)^{t} & -\beta (n)^{t} \\
\mathbf{0}_{1,n} & 1 & 0 & \beta (n)  & 0 & 0 & 0  \\
\mathbf{0}_{1,n} & 0 & 1 & \alpha (n) & 0 & 0 & 0  \\
\hdotsfor{7}\\
\mathbf{f}_{n}^{0} &  -\alpha (n)^{t} & -\beta (n)^{t} & \mathbf{0}_{n}  & \mathbf{1}_{n,1} & \mathbf{0}_{n,1} & \mathbf{0}_{n,1} \\
\mathbf{1}_{1,n}  & 0 & 0 & \mathbf{0}_{1,n} & 0 & 0 & 0 \\
\alpha (n) & 0 & 0 & \mathbf{0}_{1,n}  & 0& 1 & 0 \\
\beta (n)  & 0 & 0 & \mathbf{0}_{1,n} & 0& 0 & 1 \\
\end{array}
\right\vert  \\
&=&\setlength{\arraycolsep}{0.9pt}
(-1)^{n}\cdot \Bigg(\bigg(\left\vert
\begin{array}{cc}
\mathbf{f}_{n}^{0} & \alpha^{t}(n) \\
\alpha(n) & 0
\end{array}
\right\vert +\left\vert
\begin{array}{cc}
\mathbf{f}_{n}^{0} & \beta^{t}(n) \\
\beta(n) & 0 \\
\end{array}
\right\vert-2\left\vert
\begin{array}{cc}
\mathbf{f}_{n}^{0} &  \alpha^{t}(n) \\
\beta(n) & 0 \\
\end{array}
\right\vert\bigg)\\
&&\cdot \left\vert
\begin{array}{ccc}
\mathbf{f}_{n}^{0} & \alpha^{t}(n) & \beta^{t}(n) \\
\alpha(n) & 0 & 0 \\
\beta(n) & 0 & 0 \\
\end{array}
\right\vert +|\mathbf{f}_{n}^{0}|\cdot
\left\vert
\begin{array}{cc}
\mathbf{f}_{n}^{0} & \mathbf{1}_{n,1} \\
 \mathbf{1}_{1,n} & 0 \\
 \end{array}
\right\vert\Bigg)\\
&=&(-1)^{n}\Big(b_{n}(c_{n}+d_{n}-2e_{n})+a_{n}(c_{n}+d_{n}+2e_{n})\Big)\\
&\equiv& (a_{n}+b_{n})(c_{n}+d_{n}).
\end{eqnarray*}

(10) By the formula (\ref{formula3}),
\begin{eqnarray*}
e_{2n+1}&=&\left\vert
\begin{array}{cc}
\mathbf{f}_{2n+1}^{0} & \beta^{t}(2n+1)  \\
\alpha(2n+1) & 0
\end{array}%
\right\vert =\left\vert
\begin{array}{ccc}
A_{n+1}& \mathbf{f}_{n+1,n}^{0} & \mathbf{0}_{n+1,1}\\
\mathbf{f}_{n,n+1}^{0}  & B_{n} & \mathbf{1}_{n,1} \\
\mathbf{1}_{1,n+1} & \mathbf{0}_{1,n} & 0
\end{array}
\right\vert  \\
&=&\setlength{\arraycolsep}{0.9pt}
\left\vert
\begin{array}{ccc;{2pt/2pt}cccc}
\mathbf{0}_{n+1} & \mathbf{0}_{n+1,1} & \mathbf{0}_{n+1,1} & \mathbf{f}_{n+1,n}^{0} & \mathbf{0}_{n+1,1} & -\alpha (n+1)^{t} & -\beta (n+1)^{t} \\
\mathbf{0}_{1,n+1}  & 1 & 0 & \beta (n) & 0 & 0 & 0  \\
\mathbf{0}_{1,n+1} & 0 & 1 & \alpha (n) & 0 & 0 & 0  \\
\hdotsfor{7}\\
\mathbf{f}_{n,n+1}^{0}  & -\alpha (n)^{t} & -\beta (n)^{t}
& \mathbf{0}_{n}  & \mathbf{1}_{n,1} & \mathbf{0}_{n,1} & \mathbf{0}_{n,1} \\
\mathbf{1}_{1,n+1} & 0 & 0 & \mathbf{0}_{1,n}  & 0 & 0 & 0 \\
\alpha (n+1) & 0 & 0 & \mathbf{0}_{1,n}  & 0 & 1 & 0 \\
\beta (n+1)   & 0 & 0 & \mathbf{0}_{1,n}  & 0 & 0 & 1 \\
\end{array}
\right\vert  \\
&=&(-1)^{n}\cdot \Big(\left\vert
\begin{array}{cc}
\mathbf{f}_{n+1,n}^{0} & \alpha ^{t}(n+1) \\
\end{array}
\right\vert-\left\vert
\begin{array}{cc}
\mathbf{f}_{n+1,n}^{0} & \beta ^{t}(n+1) \\
\end{array}
\right\vert\Big)\\
&&\cdot \left\vert
\begin{array}{ccc}
\mathbf{f}_{n+1,n}^{0} & \alpha ^{t}(n+1) & \beta^{t}(n+1)\\
\mathbf{1}_{1,n} & 0 & 0
\end{array}
\right\vert\\
&=&(-1)^{n}(g_{n}-h_{n})(x_{n}+y_{n})\equiv (g_{n}+h_{n})(x_{n}+y_{n}).
\end{eqnarray*}

(11) By the formula (\ref{formula2}),
\begin{eqnarray*}
g_{2n}&=&\left\vert
\begin{array}{cc}
\mathbf{f}_{2n+1,2n}^{0} & \alpha ^{t}(2n+1)  \\
\end{array}
\right\vert =(-1)^{n}\cdot\left\vert
\begin{array}{ccc}
A_{n+1,n} & \mathbf{f}_{n+1,n}^{0} & \mathbf{1}_{n+1,1}\\
\mathbf{f}_{n}^{0} & B_{n} & \mathbf{0}_{n,1} \\
\end{array}
\right\vert  \\
&=&\setlength{\arraycolsep}{0.9pt}
(-1)^{n}\cdot\left\vert
\begin{array}{ccc;{2pt/2pt}cccc}
\mathbf{0}_{n+1,n} & \mathbf{0}_{n+1,1} & \mathbf{0}_{n+1,1} & \mathbf{f}_{n+1,n}^{0}   & \mathbf{1}_{n+1,1} & -\alpha (n+1)^{t} & -\beta (n+1)^{t} \\
\mathbf{0}_{1,n} & 1 & 0 & \beta (n)  & 0 & 0 & 0  \\
\mathbf{0}_{1,n} & 0 & 1 & \alpha (n) & 0 & 0 & 0  \\
\hdotsfor{7}\\
\mathbf{f}_{n}^{0} &  -\alpha (n)^{t} & -\beta (n)^{t} & \mathbf{0}_{n}  & \mathbf{0}_{n,1} & \mathbf{0}_{n,1} & \mathbf{0}_{n,1} \\
\alpha (n) & 0 & 0 & \mathbf{0}_{1,n}  & 0& 1 & 0 \\
\beta (n)  & 0 & 0 & \mathbf{0}_{1,n} & 0& 0 & 1 \\
\end{array}
\right\vert  \\
\end{eqnarray*}
\begin{eqnarray*}
&=&(-1)^{n}\cdot \bigg(\left\vert
\begin{array}{ccc}
\mathbf{f}_{n+1,n}^{0} & \alpha^{t}(n+1) &\beta^{t}(n+1)\\
\beta(n) & 0&0
\end{array}
\right\vert\cdot\left\vert
\begin{array}{cc}
\mathbf{f}_{n}^{0} & \alpha^{t}(n) \\
\alpha(n) & 0 \\
\end{array}
\right\vert\\
&&+\left\vert
\begin{array}{ccc}
\mathbf{f}_{n+1,n}^{0} & \alpha^{t}(n+1) &\beta^{t}(n+1)\\
\alpha(n) & 0&0
\end{array}
\right\vert\cdot\left\vert
\begin{array}{cc}
\mathbf{f}_{n}^{0} & \alpha^{t}(n) \\
\beta(n) & 0 \\
\end{array}
\right\vert\\
&&-\left\vert
\begin{array}{ccc}
\mathbf{f}_{n+1,n}^{0} & \alpha^{t}(n+1) &\beta^{t}(n+1)\\
\beta(n) & 0&0
\end{array}
\right\vert\cdot\left\vert
\begin{array}{cc}
\mathbf{f}_{n}^{0} & \alpha^{t}(n) \\
\beta(n) & 0 \\
\end{array}
\right\vert\\
&&\setlength{\arraycolsep}{0.9pt}
-\left\vert
\begin{array}{ccc}
\mathbf{f}_{n+1,n}^{0} & \alpha^{t}(n+1) &\beta^{t}(n+1)\\
\alpha(n) & 0&0
\end{array}
\right\vert\cdot\left\vert
\begin{array}{cc}
\mathbf{f}_{n}^{0} & \beta^{t}(n) \\
\beta(n) & 0 \\
\end{array}
\right\vert\\
&&+
\left\vert
\begin{array}{cc}
\mathbf{f}_{n+1,n}^{0} &  \mathbf{1}_{n+1,1}\\
\end{array}
\right\vert\cdot |\mathbf{f}_{n}|\bigg) \\
&=&(-1)^{n}(c_{n}y_{n}+e_{n}x_{n}-e_{n}y_{n}-d_{n}x_{n}+a_{n}(g_{n}+h_{n}))\\
&\equiv& a_{n}(g_{n}+h_{n})+x_{n}(d_{n}+e_{n})+y_{n}(c_{n}+e_{n}).
\end{eqnarray*}

(12) By the formula (\ref{formula3}),
\begin{eqnarray*}
g_{2n+1}&=&\left\vert
\begin{array}{cc}
\mathbf{f}_{2n+2,2n+1}^{0} & \alpha^{t}(2n+2)  \\
\end{array}%
\right\vert =\left\vert
\begin{array}{ccc}
A_{n+1}& \mathbf{f}_{n+1,n}^{0} & \mathbf{1}_{n+1,1}\\
\mathbf{f}_{n+1}^{0}  & B_{n+1,n} & \mathbf{0}_{n+1,1} \\
\end{array}
\right\vert  \\
&=&\setlength{\arraycolsep}{0.1pt}
\left\vert
\begin{array}{ccc;{2pt/2pt}cccc}
\mathbf{0}_{n+1} & \mathbf{0}_{n+1,1} & \mathbf{0}_{n+1,1} & \mathbf{f}_{n+1,n}^{0} & \mathbf{1}_{n+1,1} & -\alpha (n+1)^{t} & -\beta (n+1)^{t} \\
\mathbf{0}_{1,n+1}  & 1 & 0 & \beta (n) & 0 & 0 & 0  \\
\mathbf{0}_{1,n+1} & 0 & 1 & \alpha (n) & 0 & 0 & 0  \\
\hdotsfor{7}\\
\mathbf{f}_{n+1}^{0}  & -\alpha (n+1)^{t} & -\beta (n+1)^{t}
& \mathbf{0}_{n+1,n}  & \mathbf{0}_{n+1,1} & \mathbf{0}_{n+1,1} & \mathbf{0}_{n+1,1} \\
\alpha (n+1) & 0 & 0 & \mathbf{0}_{1,n}  & 0 & 1 & 0 \\
\beta (n+1)   & 0 & 0 & \mathbf{0}_{1,n}  & 0 & 0 & 1 \\
\end{array}
\right\vert\\
&=&\setlength{\arraycolsep}{0.9pt}
(-1)^{n+1}\cdot\bigg(\left\vert
\begin{array}{ccc}
\mathbf{f}_{n+1,n}^{0} & \alpha^{t}(n+1) &\beta^{t}(n+1)\\
\beta(n) & 0&0
\end{array}
\right\vert\cdot\left\vert
\begin{array}{cc}
\mathbf{f}_{n+1}^{0} & \alpha^{t}(n+1) \\
\alpha(n+1) & 0 \\
\end{array}
\right\vert\\
&&+\left\vert
\begin{array}{ccc}
\mathbf{f}_{n+1,n}^{0} & \alpha^{t}(n+1) &\beta^{t}(n+1)\\
\alpha(n) & 0&0
\end{array}
\right\vert\cdot\left\vert
\begin{array}{cc}
\mathbf{f}_{n+1}^{0} & \alpha^{t}(n+1) \\
\beta(n+1) & 0 \\
\end{array}
\right\vert\\
&&-\left\vert
\begin{array}{ccc}
\mathbf{f}_{n+1,n}^{0} & \alpha^{t}(n+1) &\beta^{t}(n+1)\\
\beta(n) & 0&0
\end{array}
\right\vert\cdot\left\vert
\begin{array}{cc}
\mathbf{f}_{n+1}^{0} & \alpha^{t}(n+1) \\
\beta(n+1) & 0 \\
\end{array}
\right\vert\\
&&\setlength{\arraycolsep}{0.01pt}
-\left\vert
\begin{array}{ccc}
\mathbf{f}_{n+1,n}^{0} & \alpha^{t}(n+1) &\beta^{t}(n+1)\\
\alpha(n) & 0&0
\end{array}
\right\vert\cdot
\left\vert
\begin{array}{cc}
\mathbf{f}_{n+1}^{0} & \beta^{t}(n+1) \\
\beta(n+1) & 0 \\
\end{array}
\right\vert\\
&&+|
\mathbf{f}_{n+1,n}^{0}  \mathbf{1}_{n+1,1}
|\cdot |\mathbf{f}_{n+1}|\bigg)\\
&=&(-1)^{n+1}\Big(c_{n+1}y_{n}+e_{n+1}x_{n}-e_{n+1}y_{n}-d_{n+1}x_{n}+a_{n+1}(g_{n}+h_{n})\Big)\\
&\equiv& a_{n+1}(g_{n}+h_{n})+x_{n}(d_{n+1}+e_{n+1})+y_{n}(c_{n+1}+e_{n+1}).
\end{eqnarray*}

(13) By the formula (\ref{formula2}),
\begin{eqnarray*}
h_{2n}&=&\left\vert
\begin{array}{cc}
\mathbf{f}_{2n+1,2n}^{0} & \beta ^{t}(2n+1)  \\
\end{array}
\right\vert =\left\vert
\begin{array}{ccc}
A_{n+1,n} & \mathbf{0}_{n+1,1}& \mathbf{f}_{n+1,n}^{0} \\
\mathbf{f}_{n}^{0} & \mathbf{1}_{n,1} & B_{n}  \\
\end{array}
\right\vert  \\
\end{eqnarray*}
\begin{eqnarray*}
&=&\setlength{\arraycolsep}{0.9pt}
(-1)^{n}\cdot\left\vert
\begin{array}{cccc;{2pt/2pt}ccc}
\mathbf{0}_{n+1,n} & \mathbf{0}_{n+1,1} & \mathbf{0}_{n+1,1}   & \mathbf{0}_{n+1,1} & \mathbf{f}_{n+1,n}^{0}  & -\alpha (n+1)^{t} & -\beta (n+1)^{t} \\
\mathbf{0}_{1,n} & 0  & 1 & 0 & \beta (n)  & 0 & 0  \\
\mathbf{0}_{1,n} & 0  & 0 & 1 & \alpha (n) & 0 & 0  \\
\hdotsfor{7}\\
\mathbf{f}_{n}^{0}  & \mathbf{1}_{n,1} &  -\alpha (n)^{t} & -\beta (n)^{t} & \mathbf{0}_{n}  & \mathbf{0}_{n,1} & \mathbf{0}_{n,1} \\
\alpha (n) & 0& 0 & 0 & \mathbf{0}_{1,n}  & 1 & 0 \\
\beta (n)  & 0& 0 & 0 & \mathbf{0}_{1,n} & 0 & 1 \\
\end{array}
\right\vert  \\
&=&\setlength{\arraycolsep}{0.9pt}
(-1)^{n}\cdot \Bigg(\bigg(\left\vert
\begin{array}{ccc}
\mathbf{f}_{n+1,n}^{0} & \alpha^{t}(n+1) &\beta^{t}(n+1)\\
\beta(n) & 0&0
\end{array}
\right\vert-\left\vert
\begin{array}{ccc}
\mathbf{f}_{n+1,n}^{0}& \alpha^{t}(n+1) &\beta^{t}(n+1)\\
\alpha(n) & 0&0
\end{array}
\right\vert\bigg)\\
&&\cdot\left\vert
\begin{array}{ccc}
\mathbf{f}_{n}^{0} & \alpha^{t}(n) &\beta^{t}(n)\\
\alpha(n) & 0&0\\
\beta(n) & 0&0\\
\end{array}
\right\vert+|
\mathbf{f}_{n+1,n}^{0}  \alpha^{t}(n+1) |\cdot\left\vert
\begin{array}{cc}
\mathbf{f}_{n}^{0} & \alpha^{t}(n)\\
\mathbf{1}_{1,n} & 0
\end{array}
\right\vert\\
&&+\left\vert
\begin{array}{cc}
\mathbf{f}_{n+1,n}^{0} & \beta^{t}(n+1) \\
\end{array}
\right\vert\cdot\left\vert
\begin{array}{cc}
\mathbf{f}_{n}^{0} & \beta^{t}(n)\\
\mathbf{1}_{1,n} & 0
\end{array}
\right\vert\Bigg)\\
&=&(-1)^{n}\Big(b_{n}(y_{n}-x_{n})+g_{n}(c_{n}+e_{n})+h_{n}(d_{n}+e_{n})\Big)\\
&\equiv& g_{n}(c_{n}+e_{n})+h_{n}(d_{n}+e_{n})+b_{n}(x_{n}+y_{n}).
\end{eqnarray*}

(14) By the formula (\ref{formula3}),
\begin{eqnarray*}
h_{2n+1}&=&\left\vert
\begin{array}{cc}
\mathbf{f}_{2n+2,2n+1}^{0} & \beta^{t}(2n+2)  \\
\end{array}%
\right\vert =\left\vert
\begin{array}{ccc}
A_{n+1}& \mathbf{f}_{n+1,n}^{0} & \mathbf{0}_{n+1,1}\\
\mathbf{f}_{n+1}^{0}  & B_{n+1,n} & \mathbf{1}_{n+1,1} \\
\end{array}
\right\vert  \\
&=&\setlength{\arraycolsep}{0.9pt}
\left\vert
\begin{array}{ccc;{2pt/2pt}cccc}
\mathbf{0}_{n+1} & \mathbf{0}_{n+1,1} & \mathbf{0}_{n+1,1} & \mathbf{f}_{n+1,n}^{0} & \mathbf{0}_{n+1,1} & -\alpha (n+1)^{t} & -\beta (n+1)^{t} \\
\mathbf{0}_{1,n+1}  & 1 & 0 & \beta (n) & 0 & 0 & 0  \\
\mathbf{0}_{1,n+1} & 0 & 1 & \alpha (n) & 0 & 0 & 0  \\
\hdotsfor{7}\\
\mathbf{f}_{n+1}^{0}  & -\alpha (n+1)^{t} & -\beta (n+1)^{t}
& \mathbf{0}_{n+1,n}  & \mathbf{1}_{n+1,1} & \mathbf{0}_{n+1,1} & \mathbf{0}_{n+1,1} \\
\alpha (n+1) & 0 & 0 & \mathbf{0}_{1,n}  & 0 & 1 & 0 \\
\beta (n+1)   & 0 & 0 & \mathbf{0}_{1,n}  & 0 & 0 & 1 \\
\end{array}
\right\vert\\
&=&\setlength{\arraycolsep}{0.1pt}
(-1)^{n+1}\cdot \Bigg(\bigg(\left\vert
\begin{array}{ccc}
\mathbf{f}_{n+1,n}^{0} & \alpha^{t}(n+1) &\beta^{t}(n+1)\\
\beta(n) & 0&0
\end{array}
\right\vert-\left\vert
\begin{array}{ccc}
\mathbf{f}_{n+1,n}^{0}& \alpha^{t}(n+1) &\beta^{t}(n+1)\\
\alpha(n) & 0&0
\end{array}
\right\vert\bigg)\\
&&\setlength{\arraycolsep}{0.9pt}
\cdot\left\vert
\begin{array}{ccc}
\mathbf{f}_{n+1}^{0} & \alpha^{t}(n+1) &\beta^{t}(n+1)\\
\alpha(n+1) & 0&0\\
\beta(n+1) & 0&0\\
\end{array}
\right\vert+|
\mathbf{f}_{n+1,n}^{0}  \alpha^{t}(n+1) |\cdot\left\vert
\begin{array}{cc}
\mathbf{f}_{n+1}^{0} & \alpha^{t}(n+1)\\
\mathbf{1}_{1,n+1} & 0
\end{array}
\right\vert\\
&&+\left\vert
\begin{array}{cc}
\mathbf{f}_{n+1,n}^{0} & \beta^{t}(n+1) \\
\end{array}
\right\vert\cdot\left\vert
\begin{array}{cc}
\mathbf{f}_{n+1}^{0} & \beta^{t}(n+1)\\
\mathbf{1}_{1,n+1} & 0
\end{array}
\right\vert\Bigg)\\
&=&(-1)^{n+1}\Big(b_{n+1}(y_{n}-x_{n})+g_{n}(c_{n+1}+e_{n+1})+h_{n}(d_{n+1}+e_{n+1})\Big)\\
&\equiv& g_{n}(c_{n+1}+e_{n+1})+h_{n}(d_{n+1}+e_{n+1})+b_{n+1}(x_{n}+y_{n}).
\end{eqnarray*}

(15) By the formula (\ref{formula2}),
\begin{eqnarray*}
x_{2n}&=&\setlength{\arraycolsep}{0.1pt}
\left\vert
\begin{array}{ccc}
\mathbf{f}_{2n+1,2n}^{0} & \alpha ^{t}(2n+1)& \beta ^{t}(2n+1) \\
\alpha(2n) &0 & 0
\end{array}
\right\vert
=- \left\vert
\begin{array}{cc;{2pt/2pt}cc}
A_{n+1,n} & \mathbf{0}_{n+1,1}& \mathbf{f}_{n+1,n}^{0} &\mathbf{1}_{n+1,1} \\
\hdotsfor{4}\\
\mathbf{f}_{n}^{0} & \mathbf{1}_{n,1} & B_{n} & \mathbf{0}_{n,1} \\
\mathbf{1}_{1,n}&0&\mathbf{0}_{1,n}&0
\end{array}
\right\vert  \\
\end{eqnarray*}
\begin{eqnarray*}
&=&\setlength{\arraycolsep}{0.9pt}
-\left\vert
\begin{array}{cccc;{2pt/2pt}cccc}
\mathbf{0}_{n+1,n} & \mathbf{0}_{n+1,1} & \mathbf{0}_{n+1,1}   & \mathbf{0}_{n+1,1} & \mathbf{f}_{n+1,n}^{0}  & \mathbf{1}_{n+1,1} & -\alpha (n+1)^{t} & -\beta (n+1)^{t} \\
\mathbf{0}_{1,n} & 0  & 1 & 0 & \beta (n) & 0 & 0 & 0  \\
\mathbf{0}_{1,n} & 0  & 0 & 1 & \alpha (n) & 0 & 0 & 0  \\
\hdotsfor{8}\\
\mathbf{f}_{n}^{0}  & \mathbf{1}_{n,1} &  -\alpha (n)^{t} & -\beta (n)^{t} & \mathbf{0}_{n}  & \mathbf{0}_{n,1} & \mathbf{0}_{n,1} & \mathbf{0}_{n,1}\\
\mathbf{1}_{1,n} &0 &0 &0 & \mathbf{0}_{1,n} &0 &0 &0\\
\alpha (n) & 0& 0 & 0 & \mathbf{0}_{1,n}  &0  & 1 & 0 \\
\beta (n)  & 0& 0 & 0 & \mathbf{0}_{1,n} &0 & 0 & 1 \\
\end{array}
\right\vert  \\
&=&\setlength{\arraycolsep}{0.1pt}
(-1)^{n}\cdot \Bigg(
\bigg(\left\vert
\begin{array}{ccc}
\mathbf{f}_{n+1,n}^{0}& \alpha^{t}(n+1) &\beta^{t}(n+1)\\
\beta(n) & 0&0
\end{array}
\right\vert-\left\vert
\begin{array}{ccc}
\mathbf{f}_{n+1,n}^{0}& \alpha^{t}(n+1) &\beta^{t}(n+1)\\
\alpha(n) & 0&0
\end{array}
\right\vert\bigg)\\
&&\cdot2\left\vert
\begin{array}{ccc}
\mathbf{f}_{n}^{0} & \alpha^{t}(n) &\beta^{t}(n)\\
\alpha(n) & 0 & 0\\
\beta(n) & 0  & 0\\
\end{array}
\right\vert+\left\vert
\begin{array}{cc}
\mathbf{f}_{n+1,n}^{0} & \mathbf{1}_{n+1,1}\\
\end{array}
\right\vert\cdot\left\vert
\begin{array}{cc}
\mathbf{f}_{n}^{0} & \mathbf{1}_{n,1}\\
\mathbf{1}_{1,n} &0
\end{array}
\right\vert\Bigg)\\
&=&(-1)^{n}\Big(2b_{n}(y_{n}-x_{n})+(g_{n}+h_{n})(c_{n}+d_{n}+2e_{n})\Big)\\
&\equiv &
(g_{n}+h_{n})(c_{n}+d_{n}).
\end{eqnarray*}

(16) By the formula (\ref{formula3}),
\begin{eqnarray*}
x_{2n+1}&=&
\setlength{\arraycolsep}{0.9pt}
\begin{vmatrix}
\mathbf{f}_{2n+2,2n+1}^{0} & \alpha^{t}(2n+2) & \beta^{t}(2n+2)  \\
 \alpha^{t}(2n+1) & 0 & 0
\end{vmatrix} \\
&=&
 (-1)^{n+1}\cdot\left\vert
\begin{array}{cc;{2pt/2pt}cc}
A_{n+1}  & \mathbf{0}_{n+1,1} & \mathbf{f}_{n+1,n}^{0} & \mathbf{1}_{n+1,1}\\
\hdotsfor{4}\\
\mathbf{f}_{n+1}^{0}  & \mathbf{1}_{n+1,1} & B_{n+1,n} & \mathbf{0}_{n+1,1} \\
\mathbf{1}_{1,n+1} & 0 & \mathbf{0}_{1,n}& 0
\end{array}
\right\vert\\
&=&\setlength{\arraycolsep}{0.2pt}
\scriptsize(-1)^{n+1}\left\vert
\begin{array}{cccc;{2pt/2pt}cccc}
\mathbf{0}_{n+1} & \mathbf{0}_{n+1,1} & \mathbf{0}_{n+1,1}  & \mathbf{0}_{n+1,1} & \mathbf{f}_{n+1,n}^{0} & \mathbf{1}_{n+1,1} & -\alpha (n+1)^{t} & -\beta (n+1)^{t} \\
\mathbf{0}_{1,n+1}  & 0 & 1 & 0 & \beta (n) & 0 & 0 & 0  \\
\mathbf{0}_{1,n+1} & 0 & 0 & 1 & \alpha (n) & 0 & 0 & 0  \\
\hdotsfor{8}\\
\mathbf{f}_{n+1}^{0} & \mathbf{1}_{n+1,1} & -\alpha (n+1)^{t} & -\beta (n+1)^{t}
& \mathbf{0}_{n+1,n}  & \mathbf{0}_{n+1,1} & \mathbf{0}_{n+1,1} & \mathbf{0}_{n+1,1} \\
\mathbf{1}_{1,n+1} & 0 & 0 & 0 & \mathbf{0}_{1,n}  & 0 & 0 & 0 \\
\alpha (n+1) & 0& 0 & 0 & \mathbf{0}_{1,n}  & 0 & 1 & 0 \\
\beta (n+1) & 0  & 0 & 0 & \mathbf{0}_{1,n}  & 0 & 0 & 1 \\
\end{array}
\right\vert\\
&=&\setlength{\arraycolsep}{0.01pt}
(-1)^{n+1}\cdot \bigg(\scriptsize
|\mathbf{f}_{n+1,n}^{0} \mathbf{1}_{n+1,1}|\cdot\begin{vmatrix}
\mathbf{f}_{n+1}^{0} & \mathbf{1}_{n+1,1}\\
\mathbf{1}_{1,n+1} &0
\end{vmatrix}+2\begin{vmatrix}
\mathbf{f}_{n+1}^{0} & \alpha^{t}(n+1) &\beta^{t}(n+1)\\
\alpha(n+1) & 0 & 0\\
\beta(n+1) & 0  & 0\\
\end{vmatrix}\\
&&\cdot\Big(\begin{vmatrix}
\mathbf{f}_{n+1,n}^{0}& \alpha^{t}(n+1) &\beta^{t}(n+1)\\
\beta(n) & 0&0
\end{vmatrix}-\begin{vmatrix}
\mathbf{f}_{n+1,n}^{0}& \alpha^{t}(n+1) &\beta^{t}(n+1)\\
\alpha(n) & 0&0
\end{vmatrix}\Big)\Bigg)\\
&=&(-1)^{n+1}\Big(2b_{n+1}(y_{n}-x_{n})+(g_{n}+h_{n})(c_{n+1}+d_{n+1}+2e_{n+1})\Big)\\
&\equiv & (g_{n}+h_{n})(c_{n+1}+d_{n+1}).
\end{eqnarray*}

(17) By the formula (\ref{formula2}),
\begin{eqnarray*}
y_{2n}&=&\setlength{\arraycolsep}{0.2pt}
\begin{vmatrix}
\mathbf{f}_{2n+1,2n}^{0} & \alpha ^{t}(2n+1)& \beta ^{t}(2n+1) \\
\beta(2n) &0 & 0
\end{vmatrix}=-\begin{vmatrix}
A_{n+1,n} & \mathbf{0}_{n+1,1}& \mathbf{f}_{n+1,n}^{0} &\mathbf{1}_{n+1,1} \\
\mathbf{f}_{n}^{0} & \mathbf{1}_{n,1} & B_{n} & \mathbf{0}_{n,1} \\
\mathbf{0}_{1,n}&0&\mathbf{1}_{1,n}&0
\end{vmatrix}  \\
\end{eqnarray*}
\begin{eqnarray*}
&=&\setlength{\arraycolsep}{0.2pt}
(-1)^{n+1}\left\vert
\begin{array}{cccc;{2pt/2pt}cccc}
\mathbf{0}_{n+1,n} & \mathbf{0}_{n+1,1} & \mathbf{0}_{n+1,1}   & \mathbf{0}_{n+1,1} & \mathbf{f}_{n+1,n}^{0}  & \mathbf{1}_{n+1,1} & -\alpha (n+1)^{t} & -\beta (n+1)^{t} \\
\mathbf{0}_{1,n} & 0  & 0 & 0 & \mathbf{1}_{1,n} & 0  & 0 & 0\\
\mathbf{0}_{1,n} & 0  & 1 & 0 & \beta (n) & 0 & 0 & 0  \\
\mathbf{0}_{1,n} & 0  & 0 & 1 & \alpha (n) & 0 & 0 & 0  \\
\hdotsfor{8}\\
\mathbf{f}_{n}^{0}  & \mathbf{1}_{n,1} &  -\alpha (n)^{t} & -\beta (n)^{t} & \mathbf{0}_{n}  & \mathbf{0}_{n,1} & \mathbf{0}_{n,1} & \mathbf{0}_{n,1}\\
\alpha (n) & 0& 0 & 0 & \mathbf{0}_{1,n}  &0  & 1 & 0 \\
\beta (n)  & 0& 0 & 0 & \mathbf{0}_{1,n} &0 & 0 & 1 \\
\end{array}
\right\vert \\
&=&\setlength{\arraycolsep}{0.9pt}
(-1)^{n+1}\left\vert
\begin{array}{ccc}
\mathbf{f}_{n+1,n}^{0} & \alpha^{t}(n+1) &\beta^{t}(n+1)\\
\mathbf{1}_{1,n} & 0 & 0\\
\end{array}
\right\vert
\cdot\bigg(\left\vert
\begin{array}{ccc}
\mathbf{f}_{n}^{0}& \mathbf{1}_{n,1}\\
\alpha(n) & 0
\end{array}
\right\vert-\left\vert
\begin{array}{ccc}
\mathbf{f}_{n}^{0}& \mathbf{1}_{n,1}\\
\beta(n) & 0
\end{array}
\right\vert\bigg)\\
&=&(-1)^{n+1}(x_{n}+y_{n})(c_{n}-d_{n})\equiv
(x_{n}+y_{n})(c_{n}+d_{n}).
\end{eqnarray*}

(18) By the formula (\ref{formula3}),
\begin{eqnarray*}
y_{2n+1}&=&\setlength{\arraycolsep}{0.2pt}
\begin{vmatrix}
\mathbf{f}_{2n+2,2n+1}^{0} & \alpha^{t}(2n+2) & \beta^{t}(2n+2)  \\
 \beta^{t}(2n+1) & 0 & 0
\end{vmatrix} =
\begin{vmatrix}
A_{n+1}& \mathbf{f}_{n+1,n}^{0} & \mathbf{1}_{n+1,1}& \mathbf{0}_{n+1,1}\\
\mathbf{f}_{n+1}^{0}  & B_{n+1,n} & \mathbf{0}_{n+1,1} & \mathbf{1}_{n+1,1}\\
\mathbf{0}_{1,n+1} & \mathbf{1}_{1,n}& 0 & 0
\end{vmatrix}\\
&=&\setlength{\arraycolsep}{0.2pt}
\left\vert
\begin{array}{cccc;{2pt/2pt}cccc}
 \mathbf{0}_{n+1} & \mathbf{0}_{n+1,1} & \mathbf{0}_{n+1,1}  & \mathbf{0}_{n+1,1} & \mathbf{f}_{n+1,n}^{0} & \mathbf{1}_{n+1,1} & -\alpha (n+1)^{t} & -\beta (n+1)^{t} \\
\mathbf{0}_{1,n+1} & 0 & 0 & 0 & \mathbf{1}_{1,n}  & 0 & 0 & 0 \\
\mathbf{0}_{1,n+1}  & 0 & 1 & 0 & \beta (n) & 0 & 0 & 0  \\
\mathbf{0}_{1,n+1} & 0 & 0 & 1 & \alpha (n) & 0 & 0 & 0  \\
\hdotsfor{8}\\
\mathbf{f}_{n+1}^{0} & \mathbf{1}_{n+1,1} & -\alpha (n+1)^{t} & -\beta (n+1)^{t}
& \mathbf{0}_{n+1,n}  & \mathbf{0}_{n+1,1} & \mathbf{0}_{n+1,1} & \mathbf{0}_{n+1,1} \\
\alpha (n+1) & 0& 0 & 0 & \mathbf{0}_{1,n}  & 0 & 1 & 0 \\
\beta (n+1) & 0  & 0 & 0 & \mathbf{0}_{1,n}  & 0 & 0 & 1 \\
 \end{array}
\right\vert\\
&=&(-1)^{n}\cdot \begin{vmatrix}
    \begin{smallmatrix}
\mathbf{f}_{n+1,n}^{0} & \alpha^{t}(n+1) &\beta^{t}(n+1)\\
\mathbf{1}_{1,n} & 0 & 0\\
\end{smallmatrix}
\end{vmatrix}
\cdot\bigg(\begin{vmatrix}
    \begin{smallmatrix}
\mathbf{f}_{n+1}^{0}& \mathbf{1}_{n+1,1}\\
\alpha(n+1) & 0
\end{smallmatrix}
\end{vmatrix}-\begin{vmatrix}
    \begin{smallmatrix}
\mathbf{f}_{n+1}^{0}& \mathbf{1}_{n+1,1}\\
\beta(n+1) & 0
\end{smallmatrix}
\end{vmatrix}\bigg)\\
&=&(-1)^{n}(x_{n}+y_{n})(c_{n+1}-d_{n+1})\equiv
(x_{n}+y_{n})(c_{n+1}+d_{n+1}).
\end{eqnarray*}

%\fi

\end{proof}
\begin{remark}
In this paper, we only consider the Hankel determinant $|\mathbf{f}_{n}^{p}|$
with the case $p=0$. In fact, the proof of the Lemma can be adapted to the
other cases with $p\geq1$.
\end{remark}

By Lemma \ref{lemma1}, we have following proposition.

\begin{proposition}
\label{lemma2} For any integer $n\geq1$, we have

\begin{enumerate}
\item $a_{n}\equiv\left\{
\begin{array}{ll}
1, & \hbox{if $n\equiv 0,1,2,5,8,9~ (\bmod 10)$;} \\
0, & \hbox{otherwise.}%
\end{array}
\right.$

\item $b_{n}\equiv \left\{
\begin{array}{ll}
1, & \hbox{if $n\equiv 2,4,6,8~(\bmod 10)$;} \\
0, & \hbox{otherwise.}%
\end{array}%
\right. $

\item $c_{n}\equiv\left\{
\begin{array}{ll}
1, & \hbox{if $n\equiv 1,5,9~(\bmod 10)$;} \\
0, & \hbox{otherwise.}%
\end{array}
\right.$

\item $d_{n}\equiv\left\{
\begin{array}{ll}
1, & \hbox{if $n\equiv 2,3,4,5,6,7,8~(\bmod 10)$;} \\
0, & \hbox{otherwise.}%
\end{array}
\right.$

\item $e_{n}\equiv\left\{
\begin{array}{ll}
1, & \hbox{if $n\equiv 2,5,8~(\bmod 10)$;} \\
0, & \hbox{otherwise.}%
\end{array}
\right.$

\item $g_{n}\equiv\left\{
\begin{array}{ll}
1, & \hbox{if $n\equiv 0,1,8,9~(\bmod 10)$;} \\
0, & \hbox{otherwise.}%
\end{array}
\right.$

\item $h_{n}\equiv\left\{
\begin{array}{ll}
1, & \hbox{if $n\equiv 1,2,4,5,7,8~(\bmod 10)$;} \\
0, & \hbox{otherwise.}%
\end{array}
\right.$

\item $x_{n}\equiv\left\{
\begin{array}{ll}
1, & \hbox{if $n\equiv 1,4,5,8~(\bmod 10)$;} \\
0, & \hbox{otherwise.}%
\end{array}
\right.$

\item $y_{n}\equiv\left\{
\begin{array}{ll}
1, & \hbox{if $n\equiv 2,3,4,5,6,7~(\bmod 10)$;} \\
0, & \hbox{otherwise.}%
\end{array}
\right.$
\end{enumerate}
\end{proposition}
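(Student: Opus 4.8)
The plan is to prove Proposition~\ref{lemma2} by strong induction on $n$, treating Lemma~\ref{lemma1} as a complete system of recurrences for the nine sequences. The structural point that makes this work is that the recurrences of Lemma~\ref{lemma1} are \emph{closed} modulo $2$: writing $v_n:=(a_n,b_n,c_n,d_n,e_n,g_n,h_n,x_n,y_n)\bmod 2$, formulas (1)--(18) express $v_{2n}$ as a polynomial function of $v_n$ alone, and $v_{2n+1}$ as a polynomial function of $v_n$ together with $v_{n+1}$ --- the index $n+1$ occurring only in (12), (14), (16), (18), and there only through the components $a,b,c,d,e$. Hence, setting $V_n:=(v_n,v_{n+1})\in\mathbb{F}_2^{18}$, there are fixed maps $\Phi_0,\Phi_1$ with $V_{2n}=\Phi_0(V_n)$ and $V_{2n+1}=\Phi_1(V_n)$; that is, $\{V_n\}_{n\ge 1}$ is $2$-automatic, and the assertion is that it is in fact periodic of period $10$, which can be decided by a finite computation.

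First I would settle the base of the induction by computing $v_n$ for $n$ in a short initial segment: it is enough to do $n=1$ explicitly from the small matrices in the definitions of Section~2, and the next few values either directly or by one application of Lemma~\ref{lemma1}, and then check agreement with the claimed table.

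For the inductive step, fix $n$ beyond the initial segment and assume the proposition for every smaller index. Put $m=\lfloor n/2\rfloor$, so $1\le m<n$, and in the case $n=2m+1$ also $m+1<n$; thus the table values of $v_m$, and when needed of $v_{m+1}$, are already known, at residues $m\bmod 10$ and $(m+1)\bmod 10$ respectively. Now $n\bmod 20$ determines $m\bmod 10$ and $(m+1)\bmod 10$ (indeed $\lfloor(20q+r)/2\rfloor\equiv\lfloor r/2\rfloor\pmod{10}$), so substituting these known values into the relevant half of Lemma~\ref{lemma1} and reducing in $\mathbb{F}_2$ produces a candidate for $v_n$; it remains to check that this candidate equals the table value at residue $n\bmod 10$. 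Since the pairs $\bigl(2s\bmod 10,\ (2s+1)\bmod 10\bigr)$ over $s=0,\dots,9$ run through every residue modulo $10$, this reduces to finitely many verifications: for each $s\in\{0,\dots,9\}$ one checks that the even-index formulas of Lemma~\ref{lemma1}, fed the table values at residue $s$, return the table values at residue $2s\bmod 10$, and that the odd-index formulas, fed the table values at residues $s$ and $s+1$, return the table values at residue $(2s+1)\bmod 10$. Equivalently: the $18$ identities of Lemma~\ref{lemma1} hold against the claimed values in each of the $20$ residue classes of $n$ modulo $20$.

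I expect the bulk of the work --- and the only real obstacle --- to be exactly this last step: a long but entirely mechanical verification, best laid out as a $20\times 9$ table of $0$'s and $1$'s, whose single subtlety is that the four odd-index formulas (12), (14), (16), (18) consult two consecutive rows rather than one. There is no conceptual difficulty, since every computation lives in $\mathbb{F}_2$; but the table is precisely where any slip in Lemma~\ref{lemma1} would surface, so it should be produced carefully (and is easily cross-checked against the numerical values of Coons and Vrbik~\cite{CV12}). Once Proposition~\ref{lemma2} is in hand, the fact $(\star)$ of the introduction is immediate, since $H_n(F)=a_n$ and the table gives $a_{10i+1}\equiv a_{10i+2}\equiv 1$ for all $i\ge 0$.
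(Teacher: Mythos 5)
Your plan is correct and is essentially the paper's own argument: the paper also proceeds by induction in blocks of $10$, checking the claimed table for $n\le 10$ and then, for $10k+1\le n\le 10(k+1)$, substituting the inductively known values into the recurrences of Lemma~\ref{lemma1} with the two subcases $k=2l$ and $k=2l+1$ --- which is exactly your reduction of $n$ modulo $20$ determining $\lfloor n/2\rfloor$ and $\lfloor n/2\rfloor+1$ modulo $10$. The only part you have deferred is the finite $\mathbb{F}_2$ verification itself (your $20\times 9$ table), which the paper carries out entry by entry and which is indeed purely mechanical.
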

\begin{remark}
The first assertion gives a positive answer of Coons's conjecture \cite{CV12} that the sequence $\{H_{n}(F)\}_{n\geq0}$ (module 2) is periodic with period 10.
\end{remark}

\begin{proof}
We easily check that the conclusions above are true for $n\leq10$. Now, assume the
conclusions are true for $n\leq 10k$ with $k\geq1$, then we prove the case $%
10k+1\leq n\leq 10(k+1)$ by induction. We have following nine cases to discuss.
\begin{enumerate}
  \item By the equalities (1) and (2) of Lemma \ref{lemma1}, we have following ten subcases.
           \begin{itemize}
           \item $a_{10k+1}\equiv g_{5k}+h_{5k}\equiv \left\{
           \begin{array}{ll}
           g_{10l}+h_{10l}, & \hbox{if $k=2l$;} \\
           g_{10l+5}+h_{10l+5}, & \hbox{if $k=2l+1$.}%
           \end{array}
           \right.$

           Since $10l,10l+5\leq 10k$, then, by the hypothesis, $g_{10l}\equiv
           h_{10l+5}\equiv 1,h_{10l}\equiv g_{10l+5}\equiv 0$. Hence, $a_{10k+1}\equiv
           1.$

           \item $a_{10k+2}\equiv a_{5k+1}+b_{5k+1}\equiv \left\{
           \begin{array}{ll}
           a_{10l+1}+b_{10l+1}, & \hbox{if $k=2l$;} \\
            a_{10l+6}+b_{10l+6}, & \hbox{if $k=2l+1$.}%
           \end{array}
           \right.$

           Since $10l+1,10l+6\leq 10k$, then, by the hypothesis, $a_{10l+1}\equiv
           b_{10l+6}\equiv 1,a_{10l+6}\equiv b_{10l+1}\equiv 0$. Hence, $%
           a_{10k+2}\equiv 1.$

           \item $a_{10k+3}\equiv g_{5k+1}+h_{5k+1}\equiv \left\{
            \begin{array}{ll}
           g_{10l+1}+h_{10l+1}, & \hbox{if $k=2l$;} \\
           g_{10l+6}+h_{10l+6}, & \hbox{if $k=2l+1$.}%
           \end{array}
           \right.$

           Since $10l+1,10l+6\leq 10k$, then, by the hypothesis, $g_{10l+1}\equiv
           h_{10l+1}\equiv 1,g_{10l+6}\equiv h_{10l+6}\equiv 0$. Hence, $%
           a_{10k+3}\equiv 0.$

           \item $a_{10k+4}\equiv a_{5k+2}+b_{5k+2}\equiv \left\{
           \begin{array}{ll}
           a_{10l+2}+b_{10l+2}, & \hbox{if $k=2l$;} \\
            a_{10l+7}+b_{10l+7}, & \hbox{if $k=2l+1$.}%
           \end{array}
           \right.$

           Since $10l+2,10l+7\leq 10k$, then, by the hypothesis, $a_{10l+2}\equiv
           b_{10l+2}\equiv 1,a_{10l+7}\equiv b_{10l+7}\equiv 0$. Hence, $%
           a_{10k+4}\equiv 0.$

           \item $a_{10k+5}\equiv g_{5k+2}+h_{5k+2}\equiv \left\{
           \begin{array}{ll}
           g_{10l+2}+h_{10l+2}, & \hbox{if $k=2l$;} \\
           g_{10l+7}+h_{10l+7}, & \hbox{if $k=2l+1$.}%
           \end{array}
           \right.$

           Since $10l+2,10l+7\leq 10k$, then, by the hypothesis, $g_{10l+2}\equiv
           h_{10l+7}\equiv 0,g_{10l+7}\equiv g_{10l+2}\equiv 1$. Hence, $%
           a_{10k+5}\equiv 1.$

           \item $a_{10k+6}\equiv a_{5k+3}+b_{5k+3}\equiv \left\{
           \begin{array}{ll}
           a_{10l+3}+b_{10l+3}, & \hbox{if $k=2l$;} \\
           a_{10l+8}+b_{10l+8}, & \hbox{if $k=2l+1$.}%
           \end{array}
           \right.$

           Since $10l+3,10l+8\leq 10k$, then, by the hypothesis, $a_{10l+3}\equiv
           b_{10l+3}\equiv 0,a_{10l+8}\equiv b_{10l+8}\equiv 1$. Hence, $%
           a_{10k+6}\equiv 0.$

           \item $a_{10k+7}\equiv g_{5k+3}+h_{5k+3}\equiv \left\{
           \begin{array}{ll}
           g_{10l+3}+h_{10l+3}, & \hbox{if $k=2l$;} \\
           g_{10l+8}+h_{10l+8}, & \hbox{if $k=2l+1$.}%
           \end{array}
           \right.$

           Since $10l+3,10l+8\leq 10k$, then, by the hypothesis, $g_{10l+3}\equiv
           h_{10l+3}\equiv 0,g_{10l+6}\equiv h_{10l+1}\equiv 1$. Hence, $%
           a_{10k+7}\equiv 0.$

           \item $a_{10k+8}\equiv a_{5k+4}+b_{5k+4}\equiv \left\{
           \begin{array}{ll}
           a_{10l+4}+b_{10l+4}, & \hbox{if $k=2l$;} \\
           a_{10l+9}+b_{10l+9}, & \hbox{if $k=2l+1$.}%
           \end{array}
           \right.$

           Since $10l+4,10l+9\leq 10k$, then, by the hypothesis, $a_{10l+4}\equiv
           b_{10l+9}\equiv 0,a_{10l+9}\equiv b_{10l+4}\equiv 1$. Hence, $%
            a_{10k+8}\equiv 1.$

           \item $a_{10k+9}\equiv g_{5k+4}+h_{5k+4}\equiv \left\{
           \begin{array}{ll}
           g_{10l+4}+h_{10l+4}, & \hbox{if $k=2l$;} \\
           g_{10l+9}+h_{10l+9}, & \hbox{if $k=2l+1$.}%
           \end{array}
           \right.$

           Since $10l+4,10l+9\leq 10k$, then, by the hypothesis, $g_{10l+4}\equiv
           h_{10l+9}\equiv 0,g_{10l+9}\equiv h_{10l+4}\equiv 1$. Hence, $%
           a_{10k+9}\equiv 1.$

           \item $a_{10k+10}\equiv a_{5k+5}+b_{5k+5}\equiv \left\{
           \begin{array}{ll}
           a_{10l+5}+b_{10l+5}, & \hbox{if $k=2l$;} \\
           a_{10l+10}+b_{10l+10}, & \hbox{if $k=2l+1$.}%
           \end{array}
           \right.$

           Since $10l+5,10l+10\leq 10k$, then, by the hypothesis, $a_{10l+5}\equiv
           a_{10l+10}\equiv 1,b_{10l+5}\equiv b_{10l+10}\equiv 0$. Hence, $%
           a_{10k+10}\equiv 1.$
           \end{itemize}

          Thus, the first assertion is true for $n\leq 10(k+1)$, which completes the proof  the first assertion.
  \item By the equalities (3) and (4) of Lemma \ref{lemma1}, we have following six subcases.
           \begin{itemize}
           \item $b_{10k+i}\equiv 0$ for $i=1,3,5,7,9$, since $b_{2n+1}\equiv0$ for $n\geq1$.
           \item $b_{10k+2}\equiv c_{5k+1}+d_{5k+1}\equiv \left\{
           \begin{array}{ll}
           c_{10l+1}+d_{10l+1}, & \hbox{if $k=2l$;} \\
           c_{10l+6}+d_{10l+6}, & \hbox{if $k=2l+1$.}%
           \end{array}
           \right.$

           Since $10l+1,10l+6\leq 10k$, then, by the hypothesis, $c_{10l+1}\equiv
           d_{10l+6}\equiv 1,c_{10l+6}\equiv d_{10l+1}\equiv 0$. Hence, $%
           b_{10k+2}\equiv 1.$

           \item $b_{10k+4}\equiv c_{5k+2}+d_{5k+2}\equiv \left\{
           \begin{array}{ll}
           c_{10l+2}+d_{10l+2}, & \hbox{if $k=2l$;} \\
           c_{10l+7}+d_{10l+7}, & \hbox{if $k=2l+1$.}%
           \end{array}
           \right.$

           Since $10l+2,10l+7\leq 10k$, then, by the hypothesis, $c_{10l+2}\equiv
           c_{10l+7}\equiv 0,d_{10l+2}\equiv d_{10l+7}\equiv 0$. Hence, $%
           b_{10k+4}\equiv 1.$

           \item $b_{10k+6}\equiv c_{5k+3}+d_{5k+3}\equiv \left\{
           \begin{array}{ll}
           c_{10l+3}+d_{10l+3}, & \hbox{if $k=2l$;} \\
           c_{10l+8}+d_{10l+8}, & \hbox{if $k=2l+1$.}%
           \end{array}
           \right.$

           Since $10l+3,10l+8\leq 10k$, then, by the hypothesis, $c_{10l+3}\equiv
           c_{10l+8}\equiv 0,d_{10l+3}\equiv d_{10l+8}\equiv 1$. Hence, $%
           b_{10k+6}\equiv 1.$

           \item $b_{10k+8}\equiv c_{5k+4}+d_{5k+4}\equiv \left\{
           \begin{array}{ll}
           c_{10l+4}+d_{10l+4}, & \hbox{if $k=2l$;} \\
           c_{10l+9}+d_{10l+9}, & \hbox{if $k=2l+1$.}%
           \end{array}
           \right.$

           Since $10l+4,10l+9\leq 10k$, then, by the hypothesis, $c_{10l+4}\equiv
           d_{10l+9}\equiv 0,c_{10l+9}\equiv d_{10l+4}\equiv 1$. Hence, $%
            b_{10k+8}\equiv 1.$

           \item $b_{10k+10}\equiv c_{5k+5}+d_{5k+5}\equiv \left\{
           \begin{array}{ll}
           c_{10l+5}+d_{10l+5}, & \hbox{if $k=2l$;} \\
           c_{10l+10}+d_{10l+10}, & \hbox{if $k=2l+1$.}%
           \end{array}
           \right.$

           Since $10l+5,10l+10\leq 10k$, then, by the hypothesis, $c_{10l+5}\equiv
           d_{10l+5}\equiv 1,c_{10l+10}\equiv d_{10l+10}\equiv 0$. Hence, $%
           b_{10k+10}\equiv 0.$
           \end{itemize}

           Thus, the second assertion is true for $n\leq 10(k+1)$, which completes the proof  the second assertion.
  \item By the equalities (5) and (6) of Lemma \ref{lemma1}, we have following two subcases.
           \begin{itemize}
           \item $c_{10k+i}\equiv 0$ for $i=2,4,6,8,10$, since $c_{2n}\equiv0$ for $n\geq1$.

           \item Since $c_{2n+1}\equiv g_{n}+h_{n}\equiv a_{2n+1}$ for $n\geq1$. Hence,
           $c_{10k+i}\equiv a_{10k+i}\equiv 1$ for $i=1,5,9$ and $c_{10k+i}\equiv a_{10k+i}\equiv 0$ for $i=3,7$.
           \end{itemize}

           Thus, the third assertion is true for $n\leq 10(k+1)$, which completes the proof  the third assertion.
  \item By the equalities (7) and (8) of Lemma \ref{lemma1}, we have following six subcases.
           \begin{itemize}
           \item Since $d_{2n}\equiv c_{n}+d_{n}\equiv b_{2n}$ for $n\geq1$. Hence,
           $d_{10k+i}\equiv b_{10k+i}\equiv 1$ for $i=2,4,6,8$ and $d_{10k+10}\equiv b_{10k+10}\equiv 0$ .

           \item $d_{10k+1}\equiv x_{5k}+y_{5k}\equiv \left\{
           \begin{array}{ll}
           x_{10l}+y_{10l}, & \hbox{if $k=2l$;} \\
           x_{10l+5}+y_{10l+5}, & \hbox{if $k=2l+1$.}%
           \end{array}
           \right.$

           Since $10l,10l+5\leq 10k$, then, by the hypothesis, $x_{10l}\equiv
           y_{10l}\equiv 0,x_{10l+5}\equiv y_{10l+5}\equiv 1$. Hence, $d_{10k+1}\equiv0.$

           \item $d_{10k+3}\equiv x_{5k+1}+y_{5k+1}\equiv \left\{
           \begin{array}{ll}
           x_{10l+1}+y_{10l+1}, & \hbox{if $k=2l$;} \\
           x_{10l+6}+y_{10l+6}, & \hbox{if $k=2l+1$.}%
           \end{array}
           \right.$

           Since $10l+1,10l+6\leq 10k$, then, by the hypothesis, $x_{10l+1}\equiv
           y_{10l+6}\equiv 1,x_{10l+6}\equiv y_{10l+1}\equiv 0$. Hence, $%
           d_{10k+3}\equiv 1.$

           \item $d_{10k+5}\equiv x_{5k+2}+y_{5k+2}\equiv \left\{
            \begin{array}{ll}
           x_{10l+2}+y_{10l+2}, & \hbox{if $k=2l$;} \\
           x_{10l+7}+y_{10l+7}, & \hbox{if $k=2l+1$.}%
           \end{array}
           \right.$

           Since $10l+2,10l+7\leq 10k$, then, by the hypothesis, $x_{10l+2}\equiv
           x_{10l+7}\equiv 0,x_{10l+2}\equiv y_{10l+7}\equiv 1$. Hence, $%
           d_{10k+5}\equiv 1.$

           \item $d_{10k+7}\equiv x_{5k+3}+y_{5k+3}\equiv \left\{
            \begin{array}{ll}
           x_{10l+3}+y_{10l+3}, & \hbox{if $k=2l$;} \\
           x_{10l+8}+y_{10l+8}, & \hbox{if $k=2l+1$.}%
           \end{array}
           \right.$

           Since $10l+3,10l+8\leq 10k$, then, by the hypothesis, $x_{10l+3}\equiv
           y_{10l+8}\equiv 0,x_{10l+8}\equiv y_{10l+3}\equiv 1$. Hence, $%
           d_{10k+7}\equiv 1.$

           \item $d_{10k+9}\equiv x_{5k+4}+y_{5k+4}\equiv \left\{
            \begin{array}{ll}
           x_{10l+4}+y_{10l+4}, & \hbox{if $k=2l$;} \\
           x_{10l+9}+y_{10l+9}, & \hbox{if $k=2l+1$.}%
           \end{array}
           \right.$

           Since $10l+4,10l+9\leq 10k$, then, by the hypothesis, $x_{10l+4}\equiv
           y_{10l+4}\equiv 1,x_{10l+9}\equiv y_{10l+9}\equiv 0$. Hence, $%
           d_{10k+9}\equiv 0.$
           \end{itemize}

           Thus, the fourth assertion is true for $n\leq 10(k+1)$, which completes the proof  the fourth assertion.

\item By the equalities (9) and (10) of Lemma \ref{lemma1}, we have two subcases.
           \begin{itemize}
           \item Since $e_{2n}\equiv (a_{n}+b_{n})(c_{n}+d_{n})\equiv a_{2n}b_{2n}$, hence,
           $e_{10k+2}\equiv e_{10k+8}\equiv 1$ and $e_{10k+4}\equiv e_{10k+6}\equiv e_{10k+10}\equiv0$.

           \item Since $e_{2n+1}\equiv (g_{n}+h_{n})(x_{n}+y_{n})\equiv a_{2n+1}d_{2n+1}$, hence,
           $e_{10k+5}\equiv1$ and $e_{10k+1}\equiv e_{10k+3}\equiv e_{10k+7}\equiv e_{10k+9}\equiv0$.
           \end{itemize}

           Thus, the fifth assertion is true for $n\leq 10(k+1)$, which completes the proof  the fifth assertion.

\item By the equalities (11) and (12) of Lemma \ref{lemma1}, we have following ten subcases.
           \begin{itemize}
           \item $g_{10k+1}\equiv a_{5k+1}(g_{5k}+h_{5k})+x_{5k}(d_{5k+1}+e_{5k+1})+y_{5k}(c_{5k+1}+e_{5k+1})$. If $k=2l$, by the hypothesis, then $
           g_{10k+1} \equiv  a_{10l+1}(g_{10l}+h_{10l})+x_{10l}(d_{10l+1}+e_{10l+1})+y_{10l}(c_{10l+1}+e_{10l+1})
               \equiv  1\times(1+0)+0\times(0+0)+0\times(1+0) \equiv 1. $ If $k=2l+1$, by the hypothesis, then $
           g_{10k+1} \equiv  a_{10l+6}(g_{10l+5}+h_{10l+5})+x_{10l+5}(d_{10l+6}+e_{10l+6})+y_{10l+5}(c_{10l+6}+e_{10l+6})
               \equiv  0\times(0+1)+1\times(1+0)+1\times(0+0) \equiv 1. $

           \item $g_{10k+2}\equiv a_{5k+1}(g_{5k+1}+h_{5k+1})+x_{5k+1}(d_{5k+1}+e_{5k+1})+y_{5k+1}(c_{5k+1}+e_{5k+1})$. If $k=2l$, by the hypothesis, then $
           g_{10k+1} \equiv  a_{10l+1}(g_{10l+1}+h_{10l+1})+x_{10l+1}(d_{10l+1}+e_{10l+1})+y_{10l+1}(c_{10l+1}+e_{10l+1})
               \equiv  1\times(1+1)+1\times(0+0)+0\times(1+0) \equiv 0. $ If $k=2l+1$, by the hypothesis, then $
           g_{10k+1} \equiv  a_{10l+6}(g_{10l+6}+h_{10l+6})+x_{10l+6}(d_{10l+6}+e_{10l+6})+y_{10l+6}(c_{10l+6}+e_{10l+6})
               \equiv  0\times(0+0)+0\times(1+0)+1\times(0+0) \equiv 0. $

           \item $g_{10k+3}\equiv a_{5k+2}(g_{5k+1}+h_{5k+1})+x_{5k+1}(d_{5k+2}+e_{5k+2})+y_{5k+1}(c_{5k+2}+e_{5k+2})$. If $k=2l$, by the hypothesis, then $
           g_{10k+3} \equiv  a_{10l+2}(g_{10l+1}+h_{10l+1})+x_{10l+1}(d_{10l+2}+e_{10l+2})+y_{10l+1}(c_{10l+2}+e_{10l+2})
               \equiv  1\times(1+1)+1\times(1+1)+0\times(0+1) \equiv 0. $ If $k=2l+1$, by the hypothesis, then $
           g_{10k+3} \equiv  a_{10l+7}(g_{10l+6}+h_{10l+6})+x_{10l+6}(d_{10l+7}+e_{10l+7})+y_{10l+6}(c_{10l+7}+e_{10l+7})
               \equiv  0\times(0+0)+0\times(1+0)+1\times(0+0) \equiv 0. $

           \item $g_{10k+4}\equiv a_{5k+2}(g_{5k+2}+h_{5k+2})+x_{5k+2}(d_{5k+2}+e_{5k+2})+y_{5k+2}(c_{5k+2}+e_{5k+2})$. If $k=2l$, by the hypothesis, then $
           g_{10k+4} \equiv  a_{10l+2}(g_{10l+2}+h_{10l+2})+x_{10l+2}(d_{10l+2}+e_{10l+2})+y_{10l+2}(c_{10l+2}+e_{10l+2})
               \equiv  1\times(1+1)+1\times(0+0)+0\times(1+0) \equiv 0. $ If $k=2l+1$, by the hypothesis, then $
           g_{10k+4} \equiv  a_{10l+7}(g_{10l+7}+h_{10l+7})+x_{10l+7}(d_{10l+7}+e_{10l+7})+y_{10l+7}(c_{10l+7}+e_{10l+7})
               \equiv  0\times(0+0)+0\times(1+0)+1\times(0+0) \equiv 0. $

           \item $g_{10k+5}\equiv a_{5k+3}(g_{5k+2}+h_{5k+2})+x_{5k+2}(d_{5k+3}+e_{5k+3})+y_{5k+2}(c_{5k+3}+e_{5k+3})$. If $k=2l$, by the hypothesis, then $
           g_{10k+5} \equiv  a_{10l+3}(g_{10l+2}+h_{10l+2})+x_{10l+2}(d_{10l+3}+e_{10l+3})+y_{10l+2}(c_{10l+3}+e_{10l+3})
               \equiv  1\times(1+1)+1\times(1+1)+0\times(0+1) \equiv 0. $ If $k=2l+1$, by the hypothesis, then $
           g_{10k+5} \equiv  a_{10l+8}(g_{10l+7}+h_{10l+7})+x_{10l+7}(d_{10l+8}+e_{10l+8})+y_{10l+7}(c_{10l+8}+e_{10l+8})
               \equiv  0\times(0+0)+0\times(1+0)+1\times(0+0) \equiv 0. $

           \item $g_{10k+6}\equiv a_{5k+3}(g_{5k+3}+h_{5k+3})+x_{5k+3}(d_{5k+3}+e_{5k+3})+y_{5k+3}(c_{5k+3}+e_{5k+3})$. If $k=2l$, by the hypothesis, then $
           g_{10k+6} \equiv  a_{10l+3}(g_{10l+3}+h_{10l+3})+x_{10l+3}(d_{10l+3}+e_{10l+3})+y_{10l+3}(c_{10l+3}+e_{10l+3})
               \equiv  0\times(0+0)+0\times(1+0)+1\times(0+0) \equiv 0. $ If $k=2l+1$, by the hypothesis, then $
           g_{10k+6} \equiv  a_{10l+8}(g_{10l+8}+h_{10l+8})+x_{10l+8}(d_{10l+8}+e_{10l+8})+y_{10l+8}(c_{10l+8}+e_{10l+8})
               \equiv  1\times(1+1)+1\times(1+1)+0\times(0+1) \equiv 0. $

           \item $g_{10k+7}\equiv a_{5k+4}(g_{5k+3}+h_{5k+3})+x_{5k+3}(d_{5k+4}+e_{5k+4})+y_{5k+3}(c_{5k+4}+e_{5k+4})$. If $k=2l$, by the hypothesis, then $
           g_{10k+7} \equiv  a_{10l+4}(g_{10l+3}+h_{10l+3})+x_{10l+3}(d_{10l+4}+e_{10l+4})+y_{10l+3}(c_{10l+4}+e_{10l+4})
               \equiv  0\times(0+0)+0\times(1+0)+1\times(0+0) \equiv 0. $ If $k=2l+1$, by the hypothesis, then $
           g_{10k+7} \equiv  a_{10l+9}(g_{10l+8}+h_{10l+8})+x_{10l+8}(d_{10l+9}+e_{10l+9})+y_{10l+8}(c_{10l+9}+e_{10l+9})
               \equiv  1\times(1+1)+1\times(0+0)+0\times(1+0) \equiv 0. $

           \item $g_{10k+8}\equiv a_{5k+4}(g_{5k+4}+h_{5k+4})+x_{5k+4}(d_{5k+4}+e_{5k+4})+y_{5k+4}(c_{5k+4}+e_{5k+4})$. If $k=2l$, by the hypothesis, then $
           g_{10k+8} \equiv  a_{10l+4}(g_{10l+4}+h_{10l+4})+x_{10l+4}(d_{10l+4}+e_{10l+4})+y_{10l+4}(c_{10l+4}+e_{10l+4})
               \equiv  0\times(0+1)+1\times(1+0)+1\times(0+0) \equiv 1. $ If $k=2l+1$, by the hypothesis, then $
           g_{10k+8} \equiv  a_{10l+9}(g_{10l+9}+h_{10l+9})+x_{10l+9}(d_{10l+9}+e_{10l+9})+y_{10l+9}(c_{10l+9}+e_{10l+9})
               \equiv  1\times(1+0)+0\times(0+0)+0\times(1+0) \equiv 1. $

           \item $g_{10k+9}\equiv a_{5k+5}(g_{5k+4}+h_{5k+4})+x_{5k+4}(d_{5k+5}+e_{5k+5})+y_{5k+4}(c_{5k+5}+e_{5k+5})$. If $k=2l$, by the hypothesis, then $
           g_{10k+9} \equiv  a_{10l+5}(g_{10l+4}+h_{10l+4})+x_{10l+4}(d_{10l+5}+e_{10l+5})+y_{10l+4}(c_{10l+5}+e_{10l+5})
               \equiv  1\times(0+1)+1\times(1+1)+1\times(1+1) \equiv 1. $ If $k=2l+1$, by the hypothesis, then $
           g_{10k+9} \equiv  a_{10l+10}(g_{10l+9}+h_{10l+9})+x_{10l+9}(d_{10l+10}+e_{10l+10})+y_{10l+9}(c_{10l+10}+e_{10l+10})
               \equiv  1\times(1+0)+0\times(0+0)+0\times(0+0) \equiv 1. $

           \item $g_{10k+10}\equiv a_{5k+5}(g_{5k+5}+h_{5k+5})+x_{5k+5}(d_{5k+5}+e_{5k+5})+y_{5k+5}(c_{5k+5}+e_{5k+5})$. If $k=2l$, by the hypothesis, then $
           g_{10k+10} \equiv  a_{10l+5}(g_{10l+5}+h_{10l+5})+x_{10l+5}(d_{10l+5}+e_{10l+5})+y_{10l+5}(c_{10l+5}+e_{10l+5})
               \equiv  1\times(0+1)+1\times(1+1)+1\times(1+1) \equiv 1. $ If $k=2l+1$, by the hypothesis, then $
           g_{10k+10} \equiv  a_{10l+10}(g_{10l+10}+h_{10l+10})+x_{10l+10}(d_{10l+10}+e_{10l+10})+y_{10l+10}(c_{10l+10}+e_{10l+10})
               \equiv  1\times(1+0)+0\times(0+0)+0\times(0+0) \equiv 1. $
           \end{itemize}

          Thus, the sixth assertion is true for $n\leq 10(k+1)$, which completes the proof  the sixth assertion.

\item By the equalities (13) and (14) of Lemma \ref{lemma1}, we have following ten subcases.
           \begin{itemize}
           \item $h_{10k+1}\equiv g_{5k}(c_{5k+1}+e_{5k+1})+h_{5k}(d_{5k+1}+e_{5k+1})+b_{5k+1}(x_{5k}+y_{5k})$. If $k=2l$, by the hypothesis, then $
           h_{10k+1} \equiv   g_{10l}(c_{10l+1}+e_{10l+1})+h_{10l}(d_{10l+1}+e_{10l+1})+b_{10l+1}(x_{10l}+y_{10l})
               \equiv  1\times(1+0)+0\times(0+0)+0\times(0+0) \equiv 1. $ If $k=2l+1$, by the hypothesis, then $
           h_{10k+1} \equiv  g_{10l+5}(c_{10l+6}+e_{10l+6})+h_{10l+5}(d_{10l+6}+e_{10l+6})+b_{10l+6}(x_{10l+5}+y_{10l+5})
               \equiv  0\times(0+0)+1\times(1+0)+1\times(1+1) \equiv 1. $

           \item $h_{10k+2}\equiv g_{5k+1}(c_{5k+1}+e_{5k+1})+h_{5k+1}(d_{5k+1}+e_{5k+1})+b_{5k+1}(x_{5k+1}+y_{5k+1})$. If $k=2l$, by the hypothesis, then $
           h_{10k+2} \equiv  g_{10l+1}(c_{10l+1}+e_{10l+1})+h_{10l+1}(d_{10l+1}+e_{10l+1})+b_{10l+1}(x_{10l+1}+y_{10l+1})
               \equiv  1\times(1+0)+1\times(0+0)+0\times(1+0) \equiv 1. $ If $k=2l+1$, by the hypothesis, then $
           h_{10k+2} \equiv  g_{10l+6}(c_{10l+6}+e_{10l+6})+h_{10l+6}(d_{10l+6}+e_{10l+6})+b_{10l+6}(x_{10l+6}+y_{10l+6})
               \equiv  0\times(0+0)+0\times(1+0)+1\times(0+1) \equiv 1. $

           \item $h_{10k+3}\equiv g_{5k+1}(c_{5k+2}+e_{5k+2})+h_{5k+1}(d_{5k+2}+e_{5k+2})+b_{5k+2}(x_{5k+1}+y_{5k+1})$. If $k=2l$, by the hypothesis, then $
           h_{10k+3} \equiv  g_{10l+1}(c_{10l+2}+e_{10l+2})+h_{10l+1}(d_{10l+2}+e_{10l+2})+b_{10l+2}(x_{10l+1}+y_{10l+1})
               \equiv  1\times(0+1)+1\times(1+1)+1\times(1+0) \equiv 0. $ If $k=2l+1$, by the hypothesis, then $
           h_{10k+3} \equiv  g_{10l+6}(c_{10l+7}+e_{10l+7})+h_{10l+6}(d_{10l+7}+e_{10l+7})+b_{10l+7}(x_{10l+6}+y_{10l+6})
               \equiv  0\times(0+0)+0\times(1+0)+0\times(0+1) \equiv 0. $

           \item $h_{10k+4}\equiv g_{5k+2}(c_{5k+2}+e_{5k+2})+h_{5k+2}(d_{5k+2}+e_{5k+2})+b_{5k+2}(x_{5k+2}+y_{5k+2})$. If $k=2l$, by the hypothesis, then $
           h_{10k+4} \equiv  g_{10l+2}(c_{10l+2}+e_{10l+2})+h_{10l+2}(d_{10l+2}+e_{10l+2})+b_{10l+2}(x_{10l+2}+y_{10l+2})
               \equiv  0\times(0+1)+1\times(1+1)+1\times(0+1) \equiv 1. $ If $k=2l+1$, by the hypothesis, then $
           h_{10k+4} \equiv  g_{10l+7}(c_{10l+7}+e_{10l+7})+h_{10l+7}(d_{10l+7}+e_{10l+7})+b_{10l+7}(x_{10l+7}+y_{10l+7})
               \equiv  0\times(0+0)+0\times(1+0)+1\times(0+1) \equiv 1. $

           \item $h_{10k+5}\equiv g_{5k+2}(c_{5k+3}+e_{5k+3})+h_{5k+2}(d_{5k+3}+e_{5k+3})+b_{5k+3}(x_{5k+2}+y_{5k+2})$. If $k=2l$, by the hypothesis, then $
           h_{10k+5} \equiv  g_{10l+2}(c_{10l+3}+e_{10l+3})+h_{10l+2}(d_{10l+3}+e_{10l+3})+b_{10l+3}(x_{10l+2}+y_{10l+2})
               \equiv  0\times(0+0)+1\times(1+0)+0\times(0+1) \equiv 1. $ If $k=2l+1$, by the hypothesis, then $
           h_{10k+5} \equiv  g_{10l+7}(c_{10l+8}+e_{10l+8})+h_{10l+7}(d_{10l+8}+e_{10l+8})+b_{10l+8}(x_{10l+7}+y_{10l+7})
               \equiv  0\times(0+1)+1\times(1+1)+1\times(0+1) \equiv 1. $

           \item $h_{10k+6}\equiv g_{5k+3}(c_{5k+3}+e_{5k+3})+h_{5k+3}(d_{5k+3}+e_{5k+3})+b_{5k+3}(x_{5k+3}+y_{5k+3})$. If $k=2l$, by the hypothesis, then $
           h_{10k+6} \equiv  g_{10l+3}(c_{10l+3}+e_{10l+3})+h_{10l+3}(d_{10l+3}+e_{10l+3})+b_{10l+3}(x_{10l+3}+y_{10l+3})
               \equiv  0\times(0+0)+0\times(1+0)+0\times(0+1) \equiv 0. $ If $k=2l+1$, by the hypothesis, then $
           h_{10k+6} \equiv  g_{10l+8}(c_{10l+8}+e_{10l+8})+h_{10l+8}(d_{10l+8}+e_{10l+8})+b_{10l+8}(x_{10l+8}+y_{10l+8})
               \equiv  1\times(0+1)+1\times(1+1)+1\times(1+0) \equiv 0. $

           \item $h_{10k+7}\equiv g_{5k+3}(c_{5k+4}+e_{5k+4})+h_{5k+3}(d_{5k+4}+e_{5k+4})+b_{5k+4}(x_{5k+3}+y_{5k+3})$. If $k=2l$, by the hypothesis, then $
           h_{10k+7} \equiv   g_{10l+3}(c_{10l+4}+e_{10l+4})+h_{10l+3}(d_{10l+4}+e_{10l+4})+b_{10l+4}(x_{10l+3}+y_{10l+3})
               \equiv  0\times(0+0)+0\times(1+0)+1\times(0+1) \equiv 1. $ If $k=2l+1$, by the hypothesis, then $
           h_{10k+7} \equiv  g_{10l+8}(c_{10l+9}+e_{10l+9})+h_{10l+8}(d_{10l+9}+e_{10l+9})+b_{10l+9}(x_{10l+8}+y_{10l+8})
               \equiv  1\times(1+0)+1\times(0+0)+0\times(1+0) \equiv 1. $

           \item $h_{10k+8}\equiv g_{5k+4}(c_{5k+4}+e_{5k+4})+h_{5k+4}(d_{5k+4}+e_{5k+4})+b_{5k+4}(x_{5k+4}+y_{5k+4})$. If $k=2l$, by the hypothesis, then $
           h_{10k+8} \equiv   g_{10l+4}(c_{10l+4}+e_{10l+4})+h_{10l+4}(d_{10l+4}+e_{10l+4})+b_{10l+4}(x_{10l+4}+y_{10l+4})
               \equiv  0\times(0+0)+1\times(1+0)+1\times(1+1) \equiv 1. $ If $k=2l+1$, by the hypothesis, then $
           h_{10k+8} \equiv  g_{10l+9}(c_{10l+9}+e_{10l+9})+h_{10l+9}(d_{10l+9}+e_{10l+9})+b_{10l+9}(x_{10l+9}+y_{10l+9})
               \equiv  1\times(1+0)+0\times(0+0)+0\times(0+0) \equiv 1. $

           \item $h_{10k+9}\equiv g_{5k+4}(c_{5k+5}+e_{5k+5})+h_{5k+4}(d_{5k+5}+e_{5k+5})+b_{5k+5}(x_{5k+4}+y_{5k+4})$. If $k=2l$, by the hypothesis, then $
           h_{10k+9} \equiv  g_{10l+4}(c_{10l+5}+e_{10l+5})+h_{10l+4}(d_{10l+5}+e_{10l+5})+b_{10l+5}(x_{10l+4}+y_{10l+4})
               \equiv  0\times(1+1)+1\times(1+1)+0\times(1+1) \equiv 0. $ If $k=2l+1$, by the hypothesis, then $
           h_{10k+9} \equiv  g_{10l+9}(c_{10l+10}+e_{10l+10})+h_{10l+9}(d_{10l+10}+e_{10l+10})+b_{10l+10}(x_{10l+9}+y_{10l+9})
               \equiv  1\times(0+0)+0\times(0+0)+0\times(0+0) \equiv 0. $

           \item $h_{10k+10}\equiv g_{5k+5}(c_{5k+5}+e_{5k+5})+h_{5k+5}(d_{5k+5}+e_{5k+5})+b_{5k+5}(x_{5k+5}+y_{5k+5})$. If $k=2l$, by the hypothesis, then $
           h_{10k+10} \equiv  g_{10l+5}(c_{10l+5}+e_{10l+5})+h_{10l+5}(d_{10l+5}+e_{10l+5})+b_{10l+5}(x_{10l+5}+y_{10l+5})
               \equiv  0\times(1+1)+1\times(1+1)+0\times(1+1) \equiv 0. $ If $k=2l+1$, by the hypothesis, then $
           h_{10k+10} \equiv  g_{10l+10}(c_{10l+10}+e_{10l+10})+h_{10l+10}(d_{10l+10}+e_{10l+10})+b_{10l+10}(x_{10l+10}+y_{10l+10})
               \equiv  1\times(0+0)+0\times(0+0)+0\times(0+0) \equiv 0. $
           \end{itemize}

          Thus, the seventh assertion is true for $n\leq 10(k+1)$, which completes the proof  the seventh assertion.

\item By the equalities (15) and (16) of Lemma \ref{lemma1}, we have two subcases.
           \begin{itemize}
           \item Since $x_{2n}\equiv (g_{n}+h_{n})(c_{n}+d_{n})\equiv a_{2n+1}b_{2n}$, hence,
           $x_{10k+4}\equiv x_{10k+8}\equiv 1$ and $x_{10k+2}\equiv x_{10k+6}\equiv x_{10k+10}\equiv0$.

           \item Since $x_{2n+1}\equiv (g_{n}+h_{n})(c_{n+1}+d_{n+1})\equiv a_{2n+1}b_{2n+2}$, hence,
           $x_{10k+1}\equiv x_{10k+5}\equiv1$ and $x_{10k+3}\equiv x_{10k+7}\equiv x_{10k+9}\equiv0$.
           \end{itemize}

           Thus, the eighth assertion is true for $n\leq 10(k+1)$, which completes the proof  the eighth assertion.

\item By the equalities (17) and (18) of Lemma \ref{lemma1}, we have two subcases.
           \begin{itemize}
           \item Since $y_{2n}\equiv (x_{n}+y_{n})(c_{n}+d_{n})\equiv d_{2n+1}b_{2n}$, hence,
           $y_{10k+2}\equiv y_{10k+4}\equiv y_{10k+6}\equiv 1$ and $y_{10k+8}\equiv y_{10k+10}\equiv 0$.

           \item Since $y_{2n+1}\equiv (x_{n}+y_{n})(c_{n+1}+d_{n+1})\equiv d_{2n+1}b_{2n+2}$, hence,
           $y_{10k+3}\equiv y_{10k+5}\equiv y_{10k+7}\equiv1$ and $y_{10k+1}\equiv y_{10k+9}\equiv0$.
           \end{itemize}

           Thus, the ninth assertion is true for $n\leq 10(k+1)$, which completes the proof  the ninth assertion.

\end{enumerate}
\end{proof}

\section{Rational approximation}

In this section, we will use Pad\'{e} approximant to construct an infinite
sequence of `good' rational approximations. The method we use here, was
introduced in \cite{B80}. And, we prove Theorem \ref{proposition1} at
the end of this section.

Given a generating function, say $f(z)=\sum\limits_{i=0}^{\infty}u_{i}z^{i}$%
, its \emph{Pad\'{e} approximant }$[p/q]_{f}$ is a rational function $\frac{%
P(z)}{Q(z)}$ with $\deg(P(z))\leq p,\deg(Q(z))\leq q$ satisfying
\begin{eqnarray*}
f(z)=\frac{P(z)}{Q(z)}+\mathcal{O}(z^{p+q+1}), ~z\rightarrow0.
\end{eqnarray*}
where $p,q \in\mathbb{N}.$

By a classical result, if $H_{k}(f)$ is nonzero, then the pad\'{e}
approximant $[k-1/k]_{f}(z)$ exists (see \cite{B80}, pp. 34-36) and
\begin{eqnarray}  \label{formula7}
f(z)-[k-1/k]_{f}(z)=\frac{H_{k+1}(f)}{H_{k}(f)}z^{2k}+\mathcal{O}(z^{2k+1}).
\end{eqnarray}

By Lemma \ref{lemma2}, we have following corollary.

\begin{corollary}
\label{cor1} For any $k\equiv 1 (\bmod 10)$, the pad\'{e} approximant $%
[k-1/k]_{F}(z)$ exists. Moreover, there exist a nonzero real number $h_{k}$
such that
\begin{equation*}
F(z)-[k-1/k]_{F}(z)=h_{k}z^{2k}+\mathcal{O}(z^{2k+1}).
\end{equation*}
\end{corollary}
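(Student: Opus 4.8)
The plan is to read off Corollary~\ref{cor1} directly from the congruences for $a_n=H_n(F)$ obtained in Proposition~\ref{lemma2}, together with the classical Pad\'e criterion recorded in~(\ref{formula7}). I would first recall that $a_n=|\mathbf{f}_n^{0}|=H_n(F)$ by definition, and that every Hankel determinant of the integer sequence $\mathbf{f}$ is itself an integer.

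Fix $k\equiv 1\pmod{10}$. Since $k\equiv 1\pmod{10}$, assertion~(1) of Proposition~\ref{lemma2} gives $H_k(F)=a_k\equiv 1\pmod 2$, so $H_k(F)\neq 0$; and since $k+1\equiv 2\pmod{10}$, the same assertion gives $H_{k+1}(F)=a_{k+1}\equiv 1\pmod 2$, so $H_{k+1}(F)\neq 0$. (This is precisely the statement $(\star)$ from the introduction, taken at $k=10i+1$.) Because $H_k(F)\neq 0$, the result quoted before~(\ref{formula7}) ensures that the Pad\'e approximant $[k-1/k]_F(z)$ exists and that
\[
F(z)-[k-1/k]_F(z)=\frac{H_{k+1}(F)}{H_k(F)}\,z^{2k}+\mathcal{O}(z^{2k+1}).
\]
Setting $h_k:=H_{k+1}(F)/H_k(F)$ then yields the claimed expansion, and $h_k$ is a nonzero rational — hence real — number, being a quotient of two nonzero integers.

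There is essentially no obstacle here: all the work has already been carried out in Proposition~\ref{lemma2}. The only points needing care are the index bookkeeping (the hypothesis $k\equiv 1$ governs $H_k$ while $k+1\equiv 2$ governs $H_{k+1}$, and both residues $1$ and $2$ lie in the set $\{0,1,2,5,8,9\}$ on which $a_n\equiv 1$), and the observation that the Hankel determinants are integers, so that $h_k$ is genuinely well defined and nonzero rather than merely a nonvanishing formal ratio.
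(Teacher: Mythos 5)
Your proof is correct and follows exactly the paper's own argument: Proposition \ref{lemma2}(1) gives $H_k(F)\equiv H_{k+1}(F)\equiv 1\pmod 2$ for $k\equiv 1\pmod{10}$, hence both are nonzero integers, and the classical Pad\'e criterion (\ref{formula7}) yields existence and the expansion with $h_k=H_{k+1}(F)/H_k(F)\neq 0$. No discrepancies to report.
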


\begin{proof}
By Lemma \ref{lemma2}, this is an immediate conclusion of the fact that $H_{k}(F)\equiv H_{k+1}(F)\equiv 1$ if $k\equiv 1 (%
\bmod 10)$, then, taking $h_{k}=\frac{H_{k+1}(F)}{H_{k}(F)}$.
\end{proof}

To construct an infinite sequence of `good' rational approximations, we need
following two lemmas.
%One is a result of Adamczewski and Rivoal (\cite{AR09}%
%, Lemma 4.1), and the other is a slight modification of Lemma 2 of \cite%
%${Bugeaud11}.

\begin{lemma}[Adamaczewski and Rivoal \protect\cite{AR09}]
\label{lemma3} Let $\xi,\delta,\rho,\theta\in\mathbb{R}$ and $\delta\leq\rho$%
, $\theta>1$. Assume that there exist positive real numbers $c_{0},c_{1}\leq
c_{2}$ and a sequence $(\frac{p_{n}}{q_{n}})_{n\geq1}$ of rational numbers
such that
\begin{equation*}
q_{n}<q_{n+1}\leq c_{0}q_{n}^{\theta},(n\geq1),
\end{equation*}
and
\begin{equation*}
\frac{c_{1}}{q_{n}^{1+\rho}}\leq|\xi-\frac{p_{n}}{q_{n}}|\leq\frac{c_{2}}{%
q_{n}^{1+\delta}},(n\geq1).
\end{equation*}
Then we have $\mu(\xi)\leq(1+\rho)\frac{\theta}{\delta}$.
\end{lemma}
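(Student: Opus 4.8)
The plan is to establish a uniform lower bound
\[
\Bigl|\xi-\frac{p}{q}\Bigr|\ \ge\ \frac{c}{q^{(1+\rho)\theta/\delta}}\qquad\text{for all rationals }\tfrac{p}{q}\text{ with }q>Q_{0},
\]
and deduce the exponent bound from it: since for each of the finitely many $q\le Q_{0}$ only finitely many $\tfrac{p}{q}$ lie within distance $1$ of $\xi$, and since for any $\mu>(1+\rho)\theta/\delta$ one has $q^{(1+\rho)\theta/\delta-\mu}\to 0$, the inequality $|\xi-p/q|<q^{-\mu}$ can then have only finitely many solutions, i.e. $\mu(\xi)\le(1+\rho)\theta/\delta$. (That $\xi$ is irrational --- and that $\delta>0$ --- is implicit in the hypotheses, since $|\xi-p_{n}/q_{n}|\ge c_{1}q_{n}^{-(1+\rho)}>0$ and $q_{n}\to\infty$.)

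Concretely, given $\tfrac{p}{q}$ with $q$ large, I will let $n=n(q)$ be the least index with $q_{n}^{\delta}\ge 2c_{2}q$; this exists because $q_{n}\to\infty$, and $n\ge 2$ once $q$ is large. Minimality gives $q_{n-1}^{\delta}<2c_{2}q$, and then $q_{n}\le c_{0}q_{n-1}^{\theta}$ yields the size estimate $q_{n}\le c_{0}\,(2c_{2}q)^{\theta/\delta}$; at the same time $q_{n}^{\delta}\ge 2c_{2}q$ rearranges to $c_{2}q_{n}^{-(1+\delta)}\le(2qq_{n})^{-1}$.

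I then split into two cases. If $\tfrac{p}{q}\neq\tfrac{p_{n}}{q_{n}}$, then $|pq_{n}-p_{n}q|$ is a nonzero integer, so $|\tfrac{p}{q}-\tfrac{p_{n}}{q_{n}}|\ge(qq_{n})^{-1}$, and by the triangle inequality together with the upper bound on $|\xi-p_{n}/q_{n}|$,
\[
\Bigl|\xi-\frac{p}{q}\Bigr|\ \ge\ \frac{1}{qq_{n}}-\frac{c_{2}}{q_{n}^{1+\delta}}\ \ge\ \frac{1}{2qq_{n}}\ \ge\ \frac{1}{2c_{0}(2c_{2})^{\theta/\delta}}\cdot\frac{1}{q^{1+\theta/\delta}}.
\]
If instead $\tfrac{p}{q}=\tfrac{p_{n}}{q_{n}}$, I use the \emph{lower} bound of the hypothesis together with the size estimate for $q_{n}$:
\[
\Bigl|\xi-\frac{p}{q}\Bigr|\ =\ \Bigl|\xi-\frac{p_{n}}{q_{n}}\Bigr|\ \ge\ \frac{c_{1}}{q_{n}^{1+\rho}}\ \ge\ \frac{c_{1}}{\bigl(c_{0}(2c_{2})^{\theta/\delta}\bigr)^{1+\rho}}\cdot\frac{1}{q^{(1+\rho)\theta/\delta}}.
\]
Since $\theta>1$ and $0<\delta\le\rho$ we have $\rho\theta>\delta$, i.e. $1+\theta/\delta\le(1+\rho)\theta/\delta$, so in both cases $|\xi-p/q|$ exceeds an absolute constant times $q^{-(1+\rho)\theta/\delta}$, which is the bound claimed above.

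The heart of the matter --- and the only step that is not mere bookkeeping --- is this case distinction. The generic gap estimate by itself produces only the exponent $1+\theta/\delta$, and it is the possibility that $\tfrac{p}{q}$ coincides with one of the privileged approximations $\tfrac{p_{n}}{q_{n}}$ that is responsible for the weaker exponent $(1+\rho)\theta/\delta$. Recognizing that the hypothesized lower bound $c_{1}q_{n}^{-(1+\rho)}\le|\xi-p_{n}/q_{n}|$ is precisely what tames that case, and choosing the index $n$ so that $q_{n}^{\delta}$ sits just above $2c_{2}q$ --- so that the triangle-inequality estimate and the $\theta$-governed bound $q_{n}\le c_{0}(2c_{2}q)^{\theta/\delta}$ are simultaneously available --- is the key idea; everything else is elementary manipulation of the displayed inequalities.
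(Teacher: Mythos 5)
Your argument is correct and complete: the choice of the minimal index $n$ with $q_{n}^{\delta}\ge 2c_{2}q$, the triangle-inequality estimate when $\tfrac{p}{q}\neq\tfrac{p_{n}}{q_{n}}$, and the use of the hypothesized lower bound when $\tfrac{p}{q}=\tfrac{p_{n}}{q_{n}}$ together give the uniform bound $|\xi-\tfrac{p}{q}|\gg q^{-(1+\rho)\theta/\delta}$, which yields the claim. The paper itself gives no proof of this lemma (it is quoted from Adamczewski--Rivoal \cite{AR09}), and your proof is essentially the standard one found there, so there is nothing further to compare.
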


\begin{lemma}[Modified Bugeaud \protect\cite{Bugeaud11}]
\label{lemma4} Let $L,k,m_{0}\geq2$ be positive integers, $R:=R(L)$ be a
real number. Let $\mathcal{A}$ be any subset of integers of $%
[k^{L-1},k^{L}-1]$ satisfying $[k^{L-1},k^{L}-1]\subset\bigcup_{x\in\mathcal{%
A}}[x-R,x+R]$. Assume $\{a_{j}\}_{j\geq 0}$ is an increasing sequence
composed of all the numbers of the form $lk^{m}$, where $m\geq m_{0}$ and $l$
ranges over $\mathcal{A}$. Then, there exists a positive integer $j_{0}$,
such that for all $j\geq j_{0}$,
\begin{equation*}
\frac{a_{j+1}}{a_{j}}<1+\frac{(k+1)R}{ k^{L-1}}.
\end{equation*}
\end{lemma}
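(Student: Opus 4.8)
The plan is to work directly with the increasingly enumerated set $S=\{\,lk^{m}:l\in\mathcal{A},\ m\ge m_{0}\,\}$, organized into the \emph{blocks} $S_{m}=\{\,lk^{m}:l\in\mathcal{A}\,\}$. Writing $\mathcal{A}=\{l_{1}<l_{2}<\cdots<l_{t}\}$, the first task is to turn the covering hypothesis into control on $\mathcal{A}$. The midpoint of two consecutive elements $l_{i}<l_{i+1}$ lies in $[k^{L-1},k^{L}-1]$, hence within distance $R$ of some $x\in\mathcal{A}$; since the nearest element of $\mathcal{A}$ to this midpoint is $l_{i}$ or $l_{i+1}$, at distance $(l_{i+1}-l_{i})/2$, we get $l_{i+1}-l_{i}\le 2R$. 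The same reasoning applied to the endpoints $k^{L-1}$ and $k^{L}-1$ gives $k^{L-1}\le l_{1}\le k^{L-1}+R$ and $k^{L}-1-R\le l_{t}\le k^{L}-1$. (Also $R>0$, since a finite union of intervals of non-positive radius cannot cover $[k^{L-1},k^{L}-1]$, an interval of length $k^{L-1}(k-1)-1\ge1$.) I will additionally use the integrality of the $l_{i}$ to sharpen these to $l_{1}\le k^{L-1}+\lfloor R\rfloor$, $l_{t}\ge k^{L}-1-\lfloor R\rfloor$ in the small cases below.

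Next comes the block structure. For each $m$ the elements of $S_{m}$ in increasing order are $l_{1}k^{m}<\cdots<l_{t}k^{m}$, and
\[
l_{t}k^{m}\le (k^{L}-1)k^{m}<k^{L+m}\le k^{L-1}k^{m+1}\le l_{1}k^{m+1},
\]
so $S_{m}$ lies entirely below $S_{m+1}$. Hence the increasing enumeration $\{a_{j}\}$ of $S$ is just $S_{m_{0}},S_{m_{0}+1},\dots$ concatenated, and for consecutive terms $a_{j}<a_{j+1}$ there are exactly two possibilities: either both lie in some $S_{m}$, so $a_{j+1}/a_{j}=l_{i+1}/l_{i}$ for some $i$; or $a_{j}=l_{t}k^{m}$ is the top of a block and $a_{j+1}=l_{1}k^{m+1}$ is the bottom of the next, so $a_{j+1}/a_{j}=kl_{1}/l_{t}$. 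Neither ratio depends on $m$, so it suffices to bound both quantities, and the bound will then hold for every $j$ (one may take $j_{0}=1$).

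The first type is immediate: $a_{j+1}/a_{j}=1+(l_{i+1}-l_{i})/l_{i}\le 1+2R/k^{L-1}<1+(k+1)R/k^{L-1}$, the last inequality because $k\ge2$ and $R>0$. The real content is the second type, the inequality $kl_{1}/l_{t}<1+(k+1)R/k^{L-1}$. Writing it as $(kl_{1}-l_{t})/l_{t}<(k+1)R/k^{L-1}$ and noting $kl_{1}-l_{t}>0$ (indeed $kl_{1}\ge k^{L}>k^{L}-1\ge l_{t}$), I would bound the numerator via the extremal estimates, $kl_{1}-l_{t}\le k(k^{L-1}+R)-(k^{L}-1-R)=(k+1)R+1$, and the denominator from below by $\max\{k^{L-1},\,k^{L}-1-R\}$ together with $l_{1}\le l_{t}$. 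When $R$ is small relative to the block (so that $k^{L}-1-R\ge k^{L-1}$), substituting $l_{t}\ge k^{L}-1-R$ reduces the claim to an elementary polynomial inequality in $k,L,R$; when $R$ is so large that $\mathcal{A}$ may be concentrated near one integer, the covering hypothesis already forces $R$ to exceed about half the block length, whence $1+(k+1)R/k^{L-1}$ is at least roughly $k^{2}$ while $kl_{1}/l_{t}\le k(k^{L}-1)/k^{L-1}<k^{2}$; the finitely many remaining small cases ($k=2$ with $L$ small) are settled by inspection, using the integrality of the $l_{i}$ to sharpen the extremal bounds. I expect this second-type estimate, with its case split, to be the only real obstacle; the rest is bookkeeping about blocks.
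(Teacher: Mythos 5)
Your argument is structurally the same as the paper's: separate the set into the blocks $S_m=\{lk^m:l\in\mathcal{A}\}$, check the blocks are disjoint and ordered, and bound the two kinds of consecutive ratios --- the within-block ratio $l_{i+1}/l_i\le 1+2R/k^{L-1}$ and the cross-block ratio $kl_1/l_t$ --- using the consequences $l_{i+1}-l_i\le 2R$, $l_1\le k^{L-1}+R$, $l_t\ge k^{L}-1-R$ of the covering hypothesis. That part is complete and correct, and you are in fact \emph{more} careful than the paper: the paper's proof simply asserts $kn_0-n_t\le(k+1)R$ and finishes in one line, whereas the hypothesis only gives $kn_0-n_t\le(k+1)R+1$, and the paper's sharper claim can genuinely fail (take $k=2$, $L=3$, $\mathcal{A}=\{5,6\}$, $R=1$: the covering of $[4,7]$ holds, yet $kn_0-n_t=4>3=(k+1)R$). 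So the extra ``$+1$'' you worry about is a real issue, not pedantry.

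The difficulty is that the treatment of that $+1$ is exactly where your proof stops being a proof, and the sketch as written does not close. Two concrete points. First, in your ``$R$ large'' case the numbers do not match: when $\mathcal{A}$ is concentrated the covering only forces $R\gtrsim D/2$ with $D=k^{L-1}(k-1)-1$, so the target bound is about $1+(k^2-1)/2=(k^2+1)/2$, not ``roughly $k^2$'', while your estimate $kl_1/l_t\le k(k^{L}-1)/k^{L-1}\approx k^2$ is larger than that. What saves this case is the sharper bound $kl_1/l_t\le k$ (from $l_1\le l_t$), together with showing $1+(k+1)R/k^{L-1}>k$; the natural threshold is thus $R\ge(k-1)k^{L-1}/(k+1)$ rather than $R\approx D/2$. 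Second, in the complementary case the ``elementary polynomial inequality'' you defer to --- after clearing denominators it reads $(k+1)R(D-R)>k^{L-1}$ --- is not automatic: it fails for $R$ near $D$ for every $k,L$, and for $R$ near $1/2$ it fails when $k=2$ and $L\le 3$, so the case boundaries must be chosen as above and the residual small cases really do require the integrality argument you allude to. All of this can be pushed through (and is harmless in the paper's application, where $R=\varepsilon k^{L}$ and only asymptotics matter), but as it stands the decisive cross-block inequality is outlined rather than proved, and one of the outlined comparisons is wrong as stated.
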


\begin{proof}
Suppose $\mathcal{A}=\{n_{0},n_{1},\cdots,n_{t}\}$, then $0\leq
n_{i+1}-n_{i}\leq 2R$ for $0\leq i\leq t-1$, and $kn_{0}-n_{t}\leq(k+1)R$.
Taking $j$ large enough, then we have
\begin{eqnarray*}
\frac{a_{j+1}}{a_{j}}&\leq& \max\left\{\max_{0\leq i\leq t}\{\frac{n_{i+1}}{%
n_{i}}\},\frac{k n_{0}}{n_{t}}\right\} \\
&\leq&\max_{k^{L-1}\leq n_{0}\leq k^{L}-1}\left\{1+\frac{(k+1)R}{n_{0}}\right\}\\
&=&1+\frac{(k+1)R}{ k^{L-1}},
\end{eqnarray*}
which ends the proof.
\end{proof}

\iffalse
Let $\{a_{i}\}_{i\geq 0}$ be a sequence over $\mathbb{Z}$, whose generating
function is $f(x)=\sum_{i\geq 0}a_{i}x^{i}$. Define the Hankel determinant
of order $n$ associated to $f(x)$ by
\begin{equation*}
H_{n}(f):=\left\vert
\begin{array}{cccc}
a_{0} & a_{1} & \cdots & a_{n-1} \\
a_{1} & a_{2} & \cdots & a_{n} \\
\vdots & \vdots & \ddots & \vdots \\
a_{n-1} & a_{n} & \cdots & a_{2n-2}%
\end{array}%
\right\vert
\end{equation*}
\fi
Now, we are going to prove Theorem \ref{proposition1}.

\begin{proof}[Proof of Theorem \protect\ref{proposition1}.]
Since $f(x)=\frac{A(x)}{B(x)}+C(x)f(x^{k})$, we have for any $m\geq 2$%
\begin{equation}\label{equation20}
f(x)=\frac{A_{m}(x)}{B_{m}(x)}+C_{m}(x)f(x^{k^{m}}).
\end{equation}%
where
\begin{eqnarray*}
&&A_{m}(x):= A(x)\frac{B_{m}(x)}{B(x)}+\sum\limits_{j=1}^{m-1}\prod%
\limits_{i=0}^{j-1}C(x^{k^{i}})A(x^{k^{j}})\frac{B_{m}(x)}{B(x^{k^{j}})}, \\
&&B_{m}(x) :=\prod\limits_{j=0}^{m-1}B(x^{k^{j}}),
\quad C_{m}(x):=\prod\limits_{j=0}^{m-1}C(x^{k^{j}}).
\end{eqnarray*}

Set $\deg (A(x))=\alpha $, $\deg (B(x))=\beta $, $\deg (C(x))=\gamma $.
Then, we have
\begin{eqnarray*}
\deg (A_{m}(x)) &=&\max \left\{ \alpha +\beta \frac{k^{m}-1}{k-1}-\beta
,\right. \\
&&\left. \max_{1\leq j\leq m-1}\{\gamma \frac{k^{j}-1}{k-1}+\alpha
k^{j}+\beta \frac{k^{m}-1}{k-1}-\beta k^{j}\}\right\} \\
&=&\max \left\{ \alpha +\beta \frac{k^{m}-1}{k-1}-\beta ,\gamma +\alpha
k+\beta \frac{k^{m}-1}{k-1}-\beta k,\right. \\
&&\left. \gamma \frac{k^{m-1}-1}{k-1}+\alpha k^{m-1}+\beta \frac{k^{m-1}-1}{%
k-1}\right\} \\
&\leq &(\alpha +\beta +\gamma )k^{m}, \\
\deg (B_{m}(x)) &=&\deg \left( \prod\limits_{j=0}^{m-1}B(x^{k^{j}})\right)
=\beta \sum_{0\leq i\leq m-1}k^{i}\leq \beta k^{m}, \\
\deg (C_{m}(x)) &=&\deg \left( \prod\limits_{j=0}^{m-1}C(x^{k^{j}})\right)
=\gamma \sum_{0\leq i\leq m-1}k^{i}\leq \gamma k^{m}. \\
&&
\end{eqnarray*}

Assume $H_{l}(f)H_{l+1}(f)\neq 0$, by the formula (\ref{formula7}), then
\begin{equation*}
f(x)-\frac{P_{l}(x)}{Q_{l}(x)}=h_{l}x^{2l}+\mathcal{O}(x^{2l+1}),
\end{equation*}
where $h_{l}\neq 0$ and $P_{l}(x),Q_{l}(x)\in \mathbb{Z}[x]$ with $\deg
(P_{l}(x))\leq l-1,\deg (Q_{l}(x))\leq l$. Then for any $m\geq 2$,
\begin{equation*}
f(x^{k^{m}})-\frac{P_{l}(x^{k^{m}})}{Q_{l}(x^{k^{m}})}=h_{l}x^{2l\cdot
k^{m}}+\mathcal{O}(x^{(2l+1)k^{m}}).
\end{equation*}

Then, there exist a positive constant $c(l)$ such that
\begin{equation*}
    \left|f(x^{k^{m}})-\frac{P_{l}(x^{k^{m}})}{Q_{l}(x^{k^{m}})}-h_{l}x^{2l\cdot
k^{m}}\right|\leq c(l)x^{(2l+1)k^{m}}, ~\text{for}~0<x\leq \frac{1}{2}.
\end{equation*}

Hence, for any $0<x\leq \frac{1}{2}$, by the formula (\ref{equation20}),
\begin{equation}\label{equation21}
    \left|f(x)-\frac{A_{m}(x)}{B_{m}(x)}-C_{m}(x)\cdot \frac{P_{l}(x^{k^{m}})}{%
Q_{l}(x^{k^{m}})}-C_{m}(x)h_{l}x^{2l\cdot
k^{m}}\right|\leq c(l)x^{(2l+1)k^{m}}|C_{m}(x)|.
\end{equation}

For simplicity, set
\begin{eqnarray*}
P_{l,m}(x)&:= &A_{m}(x) Q_{l}(x^{k^{m}})+B_{m}(x) C_{m}(x)
P_{l}(x^{k^{m}}), \\
Q_{l,m}(x)&:= &B_{m}(x)Q_{l}(x^{k^{m}}).
\end{eqnarray*}

We have
\begin{equation}\label{equation22}
    \left|f(x)-\frac{P_{l,m}(x)}{Q_{l,m}(x)}-C_{m}(x)h_{l}x^{2l\cdot
k^{m}}\right|\leq c(l)x^{(2l+1)k^{m}}|C_{m}(x)|.
\end{equation}
and
\begin{equation*}
       \begin{array}{rcl}
         \deg (P_{l,m}(x))  & = & \max \{\deg (A_{m}(x))+\deg (Q_{l}(x^{k^{m}})) \\
          &  & \qquad\deg (B_{m}(x))+\deg (C_{m}(x))+\deg (P_{l}(x^{k^{m}}))\} \\
          & \leq & \max \{(\alpha +\beta +\gamma )k^{m}+lk^{m}, \beta k^{m}+\gamma k^{m}+(l-1)k^{m}\} \\
          & \leq & (\alpha +\beta +\gamma +l)k^{m},\\
        \deg (Q_{l,m}(x)) &=& \deg (B_{m}(x))+\deg (Q_{l}(x^{k^{m}}))\leq \beta k^{m}+lk^{m}.
       \end{array}
\end{equation*}

Assume $C(x)=(a_{0}+a_{1}x+\cdots +a_{\gamma }x^{r})x^{s}$, where $a_{0}\neq0,r,s\geq0,r+s=\gamma$, and $\eta :=\max
\{a_{i}:0\leq i\leq \gamma \}$, then, for any $0<x\leq\frac{1}{2}$,
\begin{equation}\label{equation23}
    |C_{m}(x)|\leq\prod\limits_{j=0}^{m-1}x^{sk^{j}}\cdot\eta^{m}(1+x+x^{2}+\cdots)\leq 2\eta^{m}x^{\frac{s(k^{m}-1)}{k-1}}.
\end{equation}

Note that $C(\frac{1}{b})\neq0$ which implies that $C_{m}(\frac{1}{b})\neq0$. Hence, for any $x=\frac{1}{b}(b\geq2)$, there exists a positive constant $\zeta$ such that $|C(x)|\geq \zeta x^{s}$,
\begin{equation}\label{equation24}
    |C_{m}(x)|\geq  \zeta^{m}x^{\frac{s(k^{m}-1)}{k-1}}.
\end{equation}

Let $m\geq m_{0}(l)$ be large enough such that
\begin{equation}\label{equation25}
 c(l)2^{-k^{m}}\leq \frac{1}{2}|h_{l}|.
\end{equation}

By the formulae (\ref{equation22}) and (\ref{equation23})(\ref{equation24})(\ref{equation25}), for any $x=\frac{1}{b}(b\geq2)$,
\begin{equation}\label{equation26}
    \frac{1}{2}|h_{l}|\cdot \zeta^{m}x^{2lk^{m}+\frac{s(k^{m}-1)}{k-1}}\leq\left|f(x)-\frac{P_{l,m}(x)}{Q_{l,m}(x)}\right|\leq 3|h_{l}|\cdot \eta^{m}x^{2lk^{m}+\frac{s(k^{m}-1)}{k-1}}.
\end{equation}

\iffalse

Assume $C(x)=a_{0}+a_{1}x+\cdots +a_{\gamma }x^{\gamma }$ and $\eta :=\min
\{i:a_{i}\neq 0,0\leq i\leq \gamma \}$, we have
\begin{equation*}
C_{m}(x)\cdot f(x^{k^{m}})-C_{m}(x)\cdot \frac{P_{l}(x^{k^{m}})}{%
Q_{l}(x^{k^{m}})}=h_{l}x^{2l\cdot k^{m}+\eta \cdot \frac{k^{m}-1}{k-1}}+%
\mathcal{O}(x^{2l\cdot k^{m}+\eta \cdot \frac{k^{m}-1}{k-1}+1}).
\end{equation*}

Thus,
\begin{equation*}
f(x)-\frac{A_{m}(x)}{B_{m}(x)}-C_{m}(x)\cdot \frac{P_{l}(x^{k^{m}})}{%
Q_{l}(x^{k^{m}})}=h_{l}x^{2l\cdot k^{m}+\eta \cdot \frac{k^{m}-1}{k-1}}+%
\mathcal{O}(x^{2l\cdot k^{m}+\eta \cdot \frac{k^{m}-1}{k-1}+1}).
\end{equation*}

For simplicity, set
\begin{eqnarray*}
P_{l,m}(x)&:= &A_{m}(x) Q_{l}(x^{k^{m}})+B_{m}(x) C_{m}(x)
P_{l}(x^{k^{m}}), \\
Q_{l,m}(x)&:= &B_{m}(x)Q_{l}(x^{k^{m}}).
\end{eqnarray*}

Hence,
\begin{equation}  \label{formula8}
f(x)-\frac{P_{l,m}(x)}{Q_{l,m}(x)}=h_{l}x^{2l\cdot k^{m}+\eta \cdot \frac{%
k^{m}-1}{k-1}}+\mathcal{O}(x^{2l\cdot k^{m}+\eta \cdot \frac{k^{m}-1}{k-1}%
+1}).
\end{equation}%
and
\begin{equation*}
       \begin{array}{rcl}
         \deg (P_{l,m}(x))  & = & \max \{\deg (A_{m}(x))+\deg (Q_{l}(x^{k^{m}})) \\
          &  & \qquad\deg (B_{m}(x))+\deg (C_{m}(x))+\deg (P_{l}(x^{k^{m}}))\} \\
          & \leq & \max \{(\alpha +\beta +\gamma )k^{m}+lk^{m}, \beta k^{m}+\gamma k^{m}+(l-1)k^{m}\} \\
          & \leq & (\alpha +\beta +\gamma +l)k^{m},\\
        \deg (Q_{l,m}(x)) &=& \deg (B_{m}(x))+\deg (Q_{l}(x^{k^{m}}))\leq \beta k^{m}+lk^{m}.
       \end{array}
\end{equation*}
\fi

Suppose that $Y_{l}:=\alpha +\beta +\gamma +l$. Let $b\geq 2$ be an integer,
taking $x=1/b$, then both $p_{l,m}:=b^{Y_{l}k^{m}}P_{l,m}(1/b)$ and $%
q_{l,m}:=b^{Y_{l}k^{m}}Q_{l,m}(1/b)$ are integers. Moreover, since $%
Q_{l}(0)\neq 0,$ for sufficiently large $m$, there exist two positive real
numbers $c_{0},c_{1}$ depending only on $l,$ such that
\begin{equation}  \label{formula9}
c_{0}b^{Y_{l}k^{m}}\leq q_{l,m}\leq c_{1}b^{Y_{l}k^{m}}.
\end{equation}

which implies that
\begin{equation*}
\frac{c_{0}}{c_{1}}q_{l,m}<q_{l,m+1}\leq \frac{c_{1}}{c_{0}}q_{l,m}^{k}.
\end{equation*}

On the other hand, let $m$ is larger enough, then by (\ref{equation26}),
\begin{equation}  \label{formula10}
\frac{c_{2}}{\left( b^{Y_{l}k^{m}}\right) ^{\rho _{l}}}=\frac{c_{2}}{%
b^{(2l+\gamma )k^{m}}}\leq \left\vert f(\frac{1}{b})-\frac{p_{l,m}}{q_{l,m}}%
\right\vert \leq \frac{c_{3}}{b^{2lk^{m}}}=\frac{c_{3}}{\left(
b^{Y_{l}k^{m}}\right) ^{\delta _{l}}},
\end{equation}%
where the constants $c_{2},c_{3}$ depend on $l,$ and $\rho _{l}=\frac{%
2l+\gamma }{Y_{l}},\delta _{l}=\frac{2l}{Y_{l}}.$ Note that $\lim_{l\rightarrow\infty}\rho _{l}=\lim_{l\rightarrow\infty}\delta _{l}=2$.
By Lemma \ref{lemma3}, taking $l=n_{i}$, we have%
\begin{equation*}
\mu \left( f(\frac{1}{b})\right) \leq k\cdot \frac{\rho _{l}}{\delta _{l}-1}=k\cdot \frac{\rho _{n_{i}}}{\delta _{n_{i}}-1}\longrightarrow 2k,\quad
i\rightarrow\infty.
\end{equation*}

Now, assume that $\liminf\limits_{i\rightarrow \infty }\frac{n_{i+1}}{n_{i}}%
=1.$ Thus, for any $\varepsilon >0,$ we can find a subsequece $\left\{
n_{i_{j}}\right\} _{j\geq 0}$ of $\left\{ n_{i}\right\} _{i\geq 0}$ such
that for all $j\geq J(\varepsilon ),$%
\begin{equation*}
\frac{n_{i_{j+1}}}{n_{i_{j}}}<1+\varepsilon .
\end{equation*}
\iffalse
Give a sufficient large integer $L$ such that $k^{L-1}>n_{i_{J}}$. In this
case, $L\rightarrow \infty ,$ when $\varepsilon \rightarrow 0.$ Let $%
\{V_{L,m^{^{\prime }}}\}_{m^{^{\prime }}\geq 1}$ be the increasing sequence
composed of all the numbers $q_{l,m}$, where $m\geq m_{0}(l)$ and $l$ ranges
over the integers in $\{n_{i_{j}}:n_{i_{j}}\in \lbrack k^{L-1},k^{L}-1]\}$.
\fi

Give a sufficient large integer $L$ such that $k^{L-1}>n_{i_{J}}$. In this
case, $L\rightarrow \infty ,$ when $\varepsilon \rightarrow 0.$ Let $%
\{V_{L,m^{^{\prime }}}\}_{m^{^{\prime }}\geq 1}$ be the increasing sequence
composed of all the numbers $q_{l,m}$, where $m\geq m_{0}(l)$ and $l$ ranges
over the integers in $\mathcal{A}=\{n: \exists~j~\text{such that}~n=n_{i_{j}},n\in
\lbrack k^{L-1},k^{L}-1]\}$. For these $n_{i_{j}},$ we have $%
n_{i_{j+1}}-n_{i_{j}}<\varepsilon \cdot n_{i_{j}}<\varepsilon \cdot k^{L}$,
which implies that $\lbrack k^{L-1},k^{L}-1]\subset\bigcup_{n\in%
\mathcal{A}}[n-\varepsilon \cdot k^{L},n+\varepsilon \cdot k^{L}]$. Hence,
by Lemma \ref{lemma4} and (\ref{formula9}), there exists positive
constant $C_{1}(L)$ and positive integer $m_{1}(L)$ such that for any $%
m^{^{\prime }}\geq m_{1}(L)$, we have
\begin{equation*}
V_{L,m^{^{\prime }}}<V_{L,m^{^{\prime }}+1}\leq C_{1}(L)V_{L,m^{^{\prime
}}}^{1+\frac{(k+1)\varepsilon \cdot k^{L}}{k^{L-1}}}=C_{1}(L)V_{L,m^{^{%
\prime }}}^{1+\varepsilon \cdot k(k+1)}.
\end{equation*}

Let $U_{L,m^{^{\prime }}}$ be the integer $p_{l,m}$ such $V_{L,m^{^{\prime
}}}=q_{l,m}$. Then, by (\ref{formula10}), $\exists ~C_{2}(L),C_{3}(L)$ such
that
\begin{equation*}
\frac{C_{2}(L)}{V_{L,m^{^{\prime }}}^{\rho _{L}}}\leq \left\vert f(\frac{1}{b%
})-\frac{U_{L,m^{^{\prime }}}}{V_{L,m^{^{\prime }}}}\right\vert \leq \frac{%
C_{3}(L)}{V_{L,m^{^{\prime }}}^{\delta _{L}}},
\end{equation*}%
where $\rho _{L}$ is the value of $\rho _{l}$ at $l=k^{L}$, $\delta _{L}$ is
the value of $\delta _{l}$ at $l=k^{L-1}$. Thus, by Lemma \ref{lemma3}, for any sufficiently large $L$, we
have
\begin{equation*}
\mu (f(\frac{1}{b}))\leq \frac{\rho _{L}}{\delta _{L}-1}\cdot (1+\varepsilon
\cdot k(k+1)).
\end{equation*}

Note that $\lim\limits_{L\rightarrow \infty }\rho _{L}=\lim\limits_{L\rightarrow \infty
}\delta _{L}=2$. Hence, when $L\rightarrow \infty ,$ we have
\begin{equation*}
\mu (f(\frac{1}{b}))\leq 2\cdot (1+\varepsilon \cdot k(k+1)).
\end{equation*}

Since $\varepsilon $ is chosen arbitrarily, we are done.
\end{proof}

\end{document}